\newtheorem{prethm}{{\bf Theorem}}
\newenvironment{theorem}{\begin{prethm}{\hspace{-0.5
em}{\bf.}}}{\end{prethm}}
\newtheorem{preex}{{\bf Example}}
\newtheorem{prelem}{{\bf Lemma}}
\newtheorem{precor}{{\bf Corollary}}
\newenvironment{corollary}{\begin{precor}{\hspace{-0.5
em}{\bf.}}}{\end{precor}}
\newtheorem{prepos}{{\bf Proposition}}
\newenvironment{proposition}{\begin{prepos}{\hspace{-0.5
em}{\bf.}}}{\end{prepos}}
\newtheorem{preobv}{{\bf Observation}}
\newtheorem{predef}{{\bf Definition}}
\newtheorem{preproof}{{\bf Proof.}}
\newenvironment{proof}[1]{\begin{preproof}{\rm
               #1}\hfill{$\rule{2mm}{2mm}$}}{\end{preproof}}
\begin{document}
\renewcommand{\baselinestretch}{1.3}
\date{}
\title{{\Large\bf Star Edge Coloring of the Cartesian Product of Graphs}}

{\small
\author{
{\sc Behnaz Omoomi} 
\ and
{\sc Marzieh Vahid Dastjerdi}
\\ [1mm]
{\small \it  Department of Mathematical Sciences}\\
{\small \it  Isfahan University of Technology} \\
{\small \it 84156-83111, \ Isfahan, Iran}}
\maketitle
\begin{abstract}
\noindent A star edge coloring of a graph $G$ is a proper edge coloring of $G$ such that  every path and cycle of length four in $G$  uses at least three different colors. The star chromatic index of a graph $G$, is the smallest integer $k$ for which $G$ admits a star edge coloring with $k$ colors.  In this paper, we first obtain some upper bounds for the star chromatic index of the Cartesian product of two graphs. We then  determine the exact value of the star chromatic index of $2$-dimensional grids.  We also obtain some upper bounds on the star chromatic index of the Cartesian product of a path with a cycle,  $d$-dimensional grids, $d$-dimensional hypercubes and  $d$-dimensional toroidal grids,  for every positive integer $d$.\\
\par
\noindent Keywords: star edge coloring; star chromatic index; Cartesian product; grids; hypercubes; toroidal grids.
\end{abstract}

\section{INTRODUCTION}
Here we briefly introduce the graph theory terminology and notations that we use in this paper. For further information on graph theory concepts we refer the reader to \cite{bondy}. All graphs considered in this paper are finite, simple and undirected. We use $P_n$ and $C_n$ to denote   a path and a cycle   of order $n$, respectively. A path (cycle) with $k$ edges is referred to as a \textit{$k$-path} (\textit{$k$-cycle}). The \textit{length} of a path (or a cycle)  is the number of its  edges. Let $G$ be a  graph with vertex set $V(G)$ and edge set $E(G)$.  The number of edges that meet a specific vertex in $G$ is called the \textit{degree} of that vertex. The \textit{maximum degree} of $G$,  denoted by $\Delta(G)$ or simply $\Delta$, is the maximum over the set of degrees of all  vertices in  $G$.   The \textit{distance} of two edges in $G$ is the minimum length of the paths between every two end-points of these edges. A subset $M$ of   edges in $G$  is called a \textit{matching}  if every two edges in $M$ have no common end-point. A matching $M$ in $G$ is a \textit{perfect matching} if every vertex of $G$  is an end-point of an edge in $M$.   \\
The \textit{Cartesian product} of two graphs $G$ and $H$, denoted by $G\square H$, is a graph with vertex set $V (G) \times V (H)$, and  $(a,x)(b,y)\in E(G\square H)$ if either $ab\in E(G)$ and $x=y$, or $xy\in E(H)$ and $a=b$. 
 The vertex set of this graph can be considered as a $|V(G)|\times |V(H)|$ array such that  the subgraph induced by each row is isomorphic to $H$ (a copy of $H$) and the subgraph induced by each column is isomorphic to $G$ (a copy of $G$). A \textit{$d$-dimensional grid} $G_{l_1,l_2,\ldots,l_d}=P_{l_1}\square P_{l_2}\square\ldots\square P_{l_d}$ is the Cartesian product of $d$ paths. A \textit{$d$-dimensional hypercube} $Q_d$ is  the Cartesian product of $P_2$ by itself $d$ times. A \textit{$d$-dimensional toroidal grid} $T_{l_1,l_2,\ldots,l_d}=C_{l_1}\square C_{l_2}\square\ldots\square C_{l_d}$ is the  Cartesian product of $d$ cycles.
\par 
 A \textit{proper vertex} (\textit{edge coloring}) of graph $G $ is an assignment of colors to the vertices (edges) of $G$ such that no two adjacent vertices (edges) receive  the same color.  The minimum number of colors that is needed to color the  vertices (edges)  of $G$ properly is called the \textit{chromatic number} (\textit{chromatic~index}) of $G$, and is denoted by $\chi(G)$ \rm{(}$\chi^\prime (G)$\rm{)}. 
A subgraph $F$ of $G$ is said to be \textit{bi-colored}, if the restriction of the proper vertex (edge) coloring of $G$ to $F$, is a vertex (edge) coloring with  at most two colors \cite{mohar}. A \textit{star vertex coloring} of  $G$, is a proper vertex coloring such that no path or cycle  on four vertices in G is bi-colored (see \rm{\cite{fertin,Timmons}}).  
\par In 2008, Liu and Deng \cite{delta} introduced the edge version of the star vertex coloring that  is defined as follows. A \textit{star~edge~coloring}   of   $G$ is a proper edge coloring of $G$ such that  no path or cycle of length four in $G$ is bi-colored. We call a star edge coloring of $G$ with $k$ colors, a \textit{$k$-star~edge~coloring} of $G$. The smallest integer $k$   for which  $G$ admits a $k$-star edge coloring is called the \textit{star~chromatic~index}  of $G$ and is denoted by  $\chi^\prime_s(G)$.  Liu and Deng \cite{delta} presented  an upper bound on the star chromatic index of graphs with maximum degree $\Delta\geq 7$. In \cite{mohar},  Dvo{\v{r}}{\'a}k et al.   obtained the  lower bound $2\Delta(1 + o(1))$ and  the near-linear upper bound $\Delta . 2^{O(1)\sqrt{\log\Delta}}$  on the star chromatic index of graphs with maximum degree $\Delta$.  They also presented some upper bounds  and lower bounds on the star chromatic index of  complete graphs  and  subcubic graphs (graphs with maximum degree at most 3).
 In \rm{\cite{class}}, Bezegov{\'a}  et al.   obtained some bounds on the star chromatic index of subcubic outerplanar graphs, trees and outerplanar graphs.
Some other results on the star chromatic index of graphs can be seen in \cite{11,12,17,18,luzar,pradeep}.
\par 
This paper is organized as follows.
 In Section~\ref{general}, we give some tight upper bounds on the star chromatic index of the Cartesian product of two arbitrary graphs $G$ and $H$.   In Section \ref{paths and cycles}, we determine the exact value of $\chi^\prime_s(P_m\square P_n)$ for every integers $m,n\geq2$. Then, we prove  that $\chi^\prime_s(P_m\square C_n)\leq 7$, and we determine the exact value of $\chi^\prime_s(P_m\square C_n)$  for infinite values of $m$ and $n$. Finally, we give some upper bounds for the star chromatic index of the Cartesian product of two cycles. Moreover, applying the upper bounds that we obtained in Section \ref{general}, we give upper bounds on the star chromatic index of $d$-dimensional grids, $d$-dimensional hypercubes,  and $d$-dimensional toroidal grids.

\section{GENERAL UPPER BOUNDS}\label{general}
In this section, we first give an upper bound on the star chromatic index of the Cartesian product of two graphs in terms of their star chromatic indices and their chromatic numbers. Then, we define the concept of star compatibility  and use this  concept to obtain another upper bound for the star chromatic index of the Cartesian product of two graphs. Naturally, these bounds  imply some upper bounds on the star chromatic  index of $G\square P_n$ and $G\square C_n$, where  $G$ is an arbitrary graph. \par 
For proving some of the results in this section, we first need following  fact about the star chromatic index of cycles. It is derived from proof of Theorem~5.1 in~\cite{mohar}.
\begin{proposition}\label{p1} \rm{\cite{mohar}}
If $n\geq 3$ is a positive integer, then we have 
\[\chi^\prime_s(C_n)=
\begin{cases}
     3 &\quad\text{if}\hspace*{2mm} n\neq 5,\\
     4 &\quad\text{if}\hspace*{2mm} n=5.
         \end{cases}
\]
\end{proposition}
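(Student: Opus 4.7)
The plan is to split the statement into a lower bound $\chi'_s(C_n)\ge 3$ for all $n\ge 3$ (strengthened to $\ge 4$ when $n=5$) and a matching upper bound via explicit constructions. The lower bound $\chi'_s(C_n)\ge 3$ is immediate: two colors do not even give a proper edge coloring unless $n$ is even, and on an even cycle the unique proper $2$-edge-coloring alternates, so every four consecutive edges use only two colors and form a bi-colored $4$-path. Hence at least three colors are needed.

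For the exceptional case $n=5$, the plan is a counting argument. Suppose $C_5$ admits a proper $3$-edge-coloring. The maximum matching in $C_5$ has size $2$, so no color class can contain three edges; hence the color class sizes must be $(2,2,1)$. Let $e$ be the unique edge in the singleton class. Then $C_5-e$ is a path on $5$ vertices, i.e.\ a $4$-path, whose edges use only the other two colors. This $4$-path is bi-colored, contradicting the star condition. So $\chi'_s(C_5)\ge 4$, and the matching $4$-star edge coloring is exhibited by, e.g., assigning colors $1,2,3,4,2$ to the five consecutive edges.

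For the upper bound $\chi'_s(C_n)\le 3$ when $n\ne 5$, I will produce an explicit construction. When $3\mid n$, the periodic coloring $1,2,3,1,2,3,\dots$ works: every $4$-path has the form $(a,b,c,a)$ for a permutation $\{a,b,c\}=\{1,2,3\}$, hence uses three colors. For $n=4$ the coloring $1,2,3,2$ is a direct check. For $n\ge 7$ with $n\not\equiv 0\pmod 3$, I will build the coloring by concatenating the length-$3$ block $(1,2,3)$ with one or two length-$4$ blocks $(1,2,3,2)$ so that the total length equals $n$; this is possible because every integer $n\ge 6$ can be written as $3a+4b$ with $a,b\ge 0$, and $n=4,7$ are handled by the same blocks (with $a=0,b=1$ and $a=b=1$, respectively). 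The point is that both blocks are local rotations of $\{1,2,3\}$, so one has to verify only that the $4$-paths straddling the joins of blocks remain rainbow-in-three-colors and that the cyclic closure is proper.

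The main obstacle I expect is this boundary verification at the seams between blocks and across the wrap-around, since a careless concatenation can create a bi-colored $4$-path such as $(1,2,1,2)$ exactly at a seam. To manage it I will fix a canonical orientation of each block so that the last two colors of one block together with the first two colors of the next block always include all three colors; a small finite check (essentially of the nine possible seam patterns among the two block types, plus the cyclic closure) finishes the argument. The irreducibility of $n=5$ is then explained by the fact that $5$ cannot be written as $3a+4b$ with $a,b\ge 0$, which is exactly why no such block decomposition exists and why the counting obstruction in the second paragraph bites.
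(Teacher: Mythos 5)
Your proposal is correct, but note that the paper itself does not prove this proposition at all: it is quoted from Dvo\v{r}\'ak, Mohar and \v{S}\'amal, ``derived from the proof of Theorem 5.1 in \cite{mohar}.'' So you have supplied a self-contained elementary proof where the paper relies on a citation. Your three ingredients all check out: the lower bound $\chi'_s(C_n)\ge 3$ (forced alternation of the unique proper $2$-edge-coloring of an even cycle, giving a bi-colored $4$-path, or a bi-colored $4$-cycle when $n=4$); the lower bound $\chi'_s(C_5)\ge 4$ via the matching-size count (classes of size $(2,2,1)$, so deleting the singleton edge leaves a bi-colored $4$-path); and the upper bound by concatenating the blocks $(1,2,3)$ and $(1,2,3,2)$, using that every $n\ge 6$ is $3a+4b$ while $5$ is not. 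One small caution in the seam verification: your stated criterion --- that the last two colors of one block together with the first two of the next contain all three colors --- only covers the $2{+}2$ windows across a seam; you also need the $3{+}1$ and $1{+}3$ windows (e.g.\ $(2,3,2,1)$ when a $(1,2,3,2)$ block is followed by any block), but these do pass the finite check you promise, and no window can straddle three blocks since every block has length at least $3$. The counting obstruction for $C_5$ is arguably cleaner than extracting the case from the general cycle/path analysis in \cite{mohar}, and the block decomposition makes the role of $n=5$ as the unique exception transparent.
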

\begin{theorem}\label{th2}
For every two graphs $G$ and $H$, we have 
\begin{equation*}
\chi^\prime_s(G\square H)\leq \min \lbrace\chi^\prime_s(G)\chi(H)+\chi^\prime_s(H),\chi^\prime_s(H)\chi(G)+\chi^\prime_s(G)\rbrace.
\end{equation*}
Moreover, this bound is tight.
\end{theorem}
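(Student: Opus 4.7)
The plan is to establish the first bound $\chi^\prime_s(G)\chi(H)+\chi^\prime_s(H)$ by constructing an explicit coloring; the second bound will follow by swapping the roles of $G$ and $H$. I would fix a star edge coloring $f\colon E(G)\to\{1,\dots,\chi^\prime_s(G)\}$, a proper vertex coloring $g\colon V(H)\to\{1,\dots,\chi(H)\}$, and a star edge coloring $h\colon E(H)\to C_H$, where $C_H$ is a palette of $\chi^\prime_s(H)$ colors disjoint from the product set $\{1,\dots,\chi^\prime_s(G)\}\times\{1,\dots,\chi(H)\}$. I would then color every ``horizontal'' edge $(a,x)(b,x)$ (one lying in a copy of $G$) with the pair $(f(ab),g(x))$, and every ``vertical'' edge $(a,x)(a,y)$ (one lying in a copy of $H$) with the color $h(xy)$. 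The total palette size is exactly $\chi^\prime_s(G)\chi(H)+\chi^\prime_s(H)$. Properness is immediate at every vertex $(a,x)$: the horizontal edges incident there have pairwise distinct first coordinates by properness of $f$, the vertical edges have pairwise distinct $h$-values, and the two edge types draw from disjoint palettes.

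To check the star condition, I recall that a path or cycle of length four is bi-colored precisely when its edge colors form a pattern $c_1,c_2,c_3,c_4$ with $c_1=c_3$ and $c_2=c_4$. I would classify the four consecutive edges by type (horizontal $G$ or vertical $H$). If the pattern is pure $GGGG$ or $HHHH$, the configuration projects to a 4-path or 4-cycle inside a single copy of $G$ or $H$, and the star property of $f$ (respectively $h$) settles it. If in any mixed pattern two edges $e_i$ and $e_{i+2}$ are of different types, then $c_i\neq c_{i+2}$ because their colors lie in disjoint palettes, so the configuration is not bi-colored. This leaves only the alternating patterns $GHGH$ and $HGHG$. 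In the first case, writing the vertices as $(a_0,x_0),(a_1,x_0),(a_1,x_1),(a_2,x_1),(a_2,x_2)$, the requirement $c_1=c_3$ forces $g(x_0)=g(x_1)$, which contradicts $x_0x_1\in E(H)$ and the properness of $g$; the other alternating pattern is symmetric. For 4-cycles I would first note the standard parity observation that a mixed 4-cycle in $G\square H$ must be alternating (each vertical step must be matched to return to the initial $H$-coordinate, and similarly for $G$), so the same argument applies.

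For tightness I would exhibit $P_2\square P_2=C_4$: by Proposition~\ref{p1} we have $\chi^\prime_s(C_4)=3$, while the bound yields $\chi^\prime_s(P_2)\chi(P_2)+\chi^\prime_s(P_2)=1\cdot 2+1=3$. The main obstacle is the case analysis for the mixed patterns on 4-paths; the two decisive ingredients are the disjointness of the two palettes (which kills every non-alternating mixed pattern) together with the properness of $g$ (which kills both alternating patterns), so the guiding idea is to couple the colors of horizontal edges to a proper vertex coloring of $H$ rather than to anything intrinsic to $G$ alone.
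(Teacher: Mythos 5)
Your construction is exactly the paper's: the paper encodes your pair $(f(ab),g(x))$ as the single integer $f_G(ab)+\chi^\prime_s(G)\,c_H(x)$ and runs the identical argument for the star condition (disjoint palettes kill non-alternating patterns, properness of the vertex coloring of $H$ kills the alternating ones). The only divergence is the tightness witness --- the paper uses $P_2\square C_{2n}$ and defers the matching lower bound to Theorem~\ref{th8}, whereas your $P_2\square P_2\cong C_4$ with $\chi^\prime_s(C_4)=3$ is self-contained via Proposition~\ref{p1} and equally valid.
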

\begin{proof}{
Let $c_G$ be a proper vertex coloring of $G$ by  colors $\lbrace 0,1,2,\ldots,\chi(G)-1\rbrace$ and  $c_H$ be a proper vertex coloring of $H$ by  colors $\lbrace 0,1,2,\ldots,\chi(H)-1\rbrace$. Also, let $f_G:E(G)\rightarrow\lbrace 0,1,2,\ldots,\chi^\prime_s(G)-1\rbrace$ be a star edge coloring of  $G$ and $f_H:\nolinebreak E(H)\rightarrow\lbrace \chi^\prime_s(G),\chi^\prime_s(G)+1,\chi^\prime_s(G)+2,\ldots,\chi^\prime_s(G)+\chi^\prime_s(H) -1 \rbrace$ be a star edge coloring of  $H$.  We define edge coloring $f$ of $G\square H$ as follows. For every $x\in V(H)$ and $ab\in E(G)$, let
\begin{eqnarray*}
 f((a,x)(b,x))=f_G(ab)+\chi^\prime_s(G)c_H(x).
\end{eqnarray*}
For every $a\in V(G)$ and $xy\in E(H)$, let
\begin{eqnarray*}
 f((a,x)(a,y))=f_H(xy).
\end{eqnarray*}
Since edge colorings $f_G$ and $f_H$ use disjoint color sets, the edge coloring $f$ is proper. It remains to show that there is no bi-colored 4-path (4-cycle) in $G\square H$. By contrary, suppose that there is a bi-colored 4-path (4-cycle) $P$ in  $G\square H$. Since star edge colorings  $f_G$ and $f_H$  use different color sets,  the edges of $P$ belong alternately to $G$ and  $H$; otherwise  $P$ is not bi-colored.  Assume that  $e=(a,x)(b,x)$ and $e^\prime=(b,y)(c,y)$ are the edges of $P$ that correspond to edges $ab$ and $bc$ in  $G$.   Since  $xy\in E(H)$, then  $c_H(x)\neq c_H(y)$. Hence, we have 
\begin{equation*}
f((a,x)(b,x))-f((b,y)(c,y))=f_G(ab)-f_G(bc)+\chi^\prime_s(G)(c_H(x)-c_H(y))\neq 0.
\end{equation*} 
Therefore, path (or cycle) $P$ uses at least three different colors,  which is  a contradiction. Thus, edge coloring $f$ is a star edge coloring with  $\chi^\prime_s(G)\chi(H)+\chi^\prime_s(H)$ colors.\\
Similarly, we can define  edge coloring $g$ as follows. For every $x\in V(H)$ and $ab\in E(G)$, let
\begin{eqnarray*}
 g((a,x)(b,x))=f_G(ab).
\end{eqnarray*}
For every $a\in V(G)$ and $xy\in E(H)$, let
\begin{eqnarray*}
  g((a,x)(a,y))=f_H(xy)+\chi^\prime_s(H)c_G(a).
\end{eqnarray*}
By a similar argument,  $g$ is also a star edge coloring with  $\chi^\prime_s(H)\chi(G)+\chi^\prime_s(G)$ colors. Therefore, we have
\begin{equation*}
\chi^\prime_s(G\square H)\leq \min \lbrace\chi^\prime_s(G)\chi(H)+\chi^\prime_s(H),\chi^\prime_s(H)\chi(G)+\chi^\prime_s(G)\rbrace.
\end{equation*}
Now, we prove the tightness of the bound. Let $G\cong P_2$ and $H\cong C_{2n}$, where $n$ is a positive integer. By  Proposition~\ref{p1}, $\chi^\prime_s(C_{2n})=3$. Thus, we have 
$$\chi^\prime_s(G\square H)\leq \chi^\prime_s(P_2)\chi(C_{2n})+\chi^\prime_s(C_{2n})=5.$$
In Theorem~\ref{th8},  we will show that  $\chi^\prime_s(G\square H)=5$, which implies  the tightness of our bound.
}\end{proof}
Note that in  proof of Theorem \ref{th2}, we use $\chi(H)$ (or $\chi(G)$)  different star edge colorings of $G$ (or $H$)  to avoid  bi-colored 4-paths (4-cycles) in $G\square H$.  This idea motivates us to  define the concept of star compatibility, in order to improve the upper bound on $\chi^\prime_s(G\square H)$ as follows.
\par Let $f$ be a star edge coloring of a graph  $G$. For every vertex $v\in V(G)$, we denote  the set of  colors of  edges incident to $v$ by $A_f(v)$. Two star edge colorings $f_1$ and $f_2$ of  $G$ are called \textit{star compatible} if for every vertex $v$, $A_{f_1} (v) \cap A_{f_2} (v) =\emptyset$.
We say that  graph $G$ is \textit{$(k, t)$-star colorable} if $G$ has $t$ pairwise star compatible colorings  $f_i : E(G) \rightarrow\lbrace 0, 1,\ldots , k-1\rbrace$, $1 \leq i \leq t$.
\par 
Now, in the following theorem, we use star compatibility to  find another upper bound on the star chromatic index of the Cartesian product of graphs.
\begin{theorem}\label{th:col}
If $G$ and $H$ are two graphs such that $G$ is $(k_G, t_G)$-star colorable and  $t_G\geq\chi(H)$, then 
\begin{equation*}
\chi^\prime_s(G\square H)\leq k_G +\chi^\prime_s(H).
\end{equation*}
\end{theorem}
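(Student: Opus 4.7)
The plan is to mimic the construction used in Theorem~\ref{th2}, but replace the single star edge coloring of $G$ (which was re-indexed across rows using shifts) with a family of $t_G$ pairwise star compatible colorings, choosing a different coloring in each row according to a proper vertex coloring of $H$. Concretely, I would fix pairwise star compatible colorings $f_1,\ldots,f_{t_G}:E(G)\to\{0,1,\ldots,k_G-1\}$, fix a proper vertex coloring $c_H:V(H)\to\{1,\ldots,\chi(H)\}$ (possible since $t_G\geq\chi(H)$), and fix a star edge coloring $f_H:E(H)\to\{k_G,k_G+1,\ldots,k_G+\chi'_s(H)-1\}$ using a color set disjoint from the $f_i$'s. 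Then define $f$ on $G\square H$ by $f((a,x)(b,x))=f_{c_H(x)}(ab)$ on $G$-edges and $f((a,x)(a,y))=f_H(xy)$ on $H$-edges. The total palette has size $k_G+\chi'_s(H)$.

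Properness is immediate: within a row, $f_{c_H(x)}$ is a proper edge coloring of $G$; within a column, $f_H$ is proper on $H$; and at any vertex $(a,x)$ the colors appearing on incident $G$-edges come from $\{0,\ldots,k_G-1\}$ while those on incident $H$-edges come from the disjoint set $\{k_G,\ldots,k_G+\chi'_s(H)-1\}$. It then remains to rule out bi-colored $4$-paths and $4$-cycles, which I would do by case analysis on how the edges of such a subgraph $P$ split between $G$-edges and $H$-edges.

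Because a bi-colored subgraph uses only two colors and, by properness, these colors alternate along $P$, the two colors either both lie in the $G$-palette, both lie in the $H$-palette, or one lies in each. In the first case all edges of $P$ are $G$-edges and, being consecutive, sit in a single row whose coloring $f_{c_H(x)}$ is a star edge coloring of $G$; in the second case all edges sit in a single column colored by the star edge coloring $f_H$. Both yield contradictions. This leaves the mixed case, which is the main point of the argument: $P$ must alternate $G$-edge, $H$-edge, $G$-edge, $H$-edge, say along consecutive vertices $(a_0,x_0),(a_1,x_1),(a_2,x_2),(a_3,x_3),(a_4,x_4)$ with $x_0=x_1$, $a_1=a_2$, $x_2=x_3$, $a_3=a_4$ (and similarly for a $4$-cycle).

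The hard part is closing this mixed case, and it is exactly where star compatibility is used. The two $G$-edges of $P$ share the common color $c$ of the $G$-palette and meet at the vertex $a_1=a_2$ of $G$, but they live in rows $x_1$ and $x_2$, which are colored by $f_{c_H(x_1)}$ and $f_{c_H(x_2)}$ respectively. Since $x_1x_2\in E(H)$ and $c_H$ is a proper vertex coloring, $c_H(x_1)\neq c_H(x_2)$, so the two colorings $f_{c_H(x_1)}$ and $f_{c_H(x_2)}$ are distinct members of the star compatible family. Yet $c$ belongs to both $A_{f_{c_H(x_1)}}(a_1)$ and $A_{f_{c_H(x_2)}}(a_1)$, contradicting star compatibility. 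The same argument handles a bi-colored $4$-cycle verbatim, so $f$ is a star edge coloring of $G\square H$ with $k_G+\chi'_s(H)$ colors, establishing the stated bound.
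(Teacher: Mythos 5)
Your proposal is correct and follows essentially the same route as the paper: the same coloring $f((a,x)(b,x))=f_{c_H(x)}(ab)$, $f((a,x)(a,y))=f_H(xy)$ with disjoint palettes, and the same key step in which the two $G$-edges of a mixed alternating $4$-path (or $4$-cycle) meet at a common vertex of $G$ while lying in adjacent rows, so star compatibility of $f_{c_H(x)}$ and $f_{c_H(y)}$ forces a third color. Your explicit case split on where the two colors of a bi-colored subgraph lie is just a slightly more detailed organization of the paper's observation that only the alternating case needs attention.
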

\begin{proof}
{
Let   $f_i : E(G) \rightarrow \lbrace 0, 1, \ldots , k_G-1\rbrace$, $0\leq i\leq t_G-1$, be star compatible colorings of $G$ and $f_H:E(H)\rightarrow \lbrace k_G,k_G+1,\ldots, k_G+\chi^\prime_s(H)-1\rbrace$  be a star edge coloring of $H$. Also, let $c_H$ be a proper vertex coloring of $H$ using  colors $\lbrace 0, 1, \dots, \chi(H)-1\rbrace$.   We define edge coloring $f$ of $G\square H$ as follows. For every edge $ab$ of $G$ and every vertex $x$ of $H$,  let
\begin{eqnarray*}
 f((a,x)(b,x))=f_{c_H(x)}(ab).
\end{eqnarray*} 
 For every edge $xy$ of $H$ and every vertex $a$ of $G$,  let
\begin{eqnarray*}
f((a,x)(a,y))=f_H(xy).
\end{eqnarray*}
Note that edge coloring $f$ uses  $k_G+\chi^\prime_s(H)$ different colors. 
Since the colors of edges in $G$ and $H$ are different and colorings $f_H$ and $f_i$, $0\leq i\leq t_G-1$, are star edge colorings, then the edge coloring $f$ is a proper edge coloring and every path (or cycle) with two adjacent edges in $G$ or $H$ is not bi-colored. Thus,  we only need to consider the  case where we have a path  (or cycle) with edges $e_1=(a,x)(b,x)$, $e_2=(b,x)(b,y)$ and $e_3=(b,y)(c,y)$, respectively. In such a case, we have $f(e_1)=f_{c_H(x)}(ab)$ and $f(e_3)=f_{c_H(y)}(bc)$. On the other hand, since $xy\in E(H)$, then $c_H(x)\neq c_H(y)$, and consequently star edge colorings $f_{c_H(x)}$ and  $f_{c_H(y)}$ are star compatible. Therefore, $f_{c_H(x)}(ab)\neq f_{c_H(y)}(bc)$. This shows that $f$ is a star edge coloring of $G\square H$.
 }\end{proof}
Note that if $G$ is a  $(k_G,t_G)$-star colorable graph, then  it is also $(ak_G,at_G)$-star colorable for every positive integer $a$. In particular,
 every graph $G$ is $(a\chi^\prime_s(G),a)$-star colorable and therefore, for every graph $H$, graph $G$ is $( \chi^\prime_s(G)\chi(H),\chi(H))$-star colorable. Thus, by Theorem \ref{th:col}, we have 
 \begin{equation*}
 \chi^\prime_s(G\square H)\leq\nolinebreak \chi^\prime_s(G)\chi(H)+\chi^\prime_s(H).
 \end{equation*} 
 Hence, the upper bound in Theorem \ref{th2} can be also  obtained from Theorem~\ref{th:col}.
\begin{theorem}\label{th:col2}
If graphs $G$ and $H$  are $(k_G, t_G)$-star colorable  and   $(k_H, t_H)$-star colorable, respectively, where $t_G\geq\chi(H)$ and $t_H\geq\chi(G)$, then $G\square H$ is $(k_G +k_H,\min\lbrace t_G,t_H\rbrace)$-star colorable.
\end{theorem}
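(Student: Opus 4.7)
The plan is to extend the coloring construction from Theorem \ref{th:col} into a whole family of mutually compatible colorings by introducing a cyclic shift. Write $t=\min\{t_G,t_H\}$ and fix pairwise star compatible colorings $f_0,\ldots,f_{t_G-1}$ of $G$ on the palette $\{0,1,\ldots,k_G-1\}$ and $g_0,\ldots,g_{t_H-1}$ of $H$ on the disjoint palette $\{k_G,k_G+1,\ldots,k_G+k_H-1\}$, together with proper vertex colorings $c_G$ and $c_H$ of $G$ and $H$ using $\chi(G)$ and $\chi(H)$ colors respectively. For each $j\in\{0,1,\ldots,t-1\}$, define an edge coloring $F_j$ of $G\square H$ by
\begin{equation*}
F_j\bigl((a,x)(b,x)\bigr)=f_{(c_H(x)+j)\bmod t_G}(ab),\qquad F_j\bigl((a,x)(a,y)\bigr)=g_{(c_G(a)+j)\bmod t_H}(xy).
\end{equation*}
All $F_j$ use only the $k_G+k_H$ available colors.

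Next I would show that each $F_j$ is by itself a star edge coloring, which amounts to repeating the proof of Theorem \ref{th:col} with $c_H(x)$ replaced by $(c_H(x)+j)\bmod t_G$ and with $f_H$ replaced by $g_{(c_G(a)+j)\bmod t_H}$. The only substantive requirement in that argument is that for any edge $xy\in E(H)$ the two indices $(c_H(x)+j)\bmod t_G$ and $(c_H(y)+j)\bmod t_G$ are different, so that the two star compatible colorings of $G$ meeting in a horizontal-vertical-horizontal subpath have disjoint color sets at the shared vertex. Since $c_H(x)\neq c_H(y)$ and $\chi(H)\leq t_G$, the cyclic shift is injective on the range of $c_H$, giving exactly this. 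The analogous statement for the vertical-horizontal-vertical subpaths uses $\chi(G)\leq t_H$, and the palette split plus properness of the individual $f_i$ and $g_i$ handles every other type of length-$4$ subpath as in Theorem \ref{th:col}.

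Finally I would verify pairwise star compatibility. Fix a vertex $(a,x)\in V(G\square H)$ and indices $j\neq j'$ in $\{0,\ldots,t-1\}$. Because the $G$-colors and $H$-colors live in disjoint palettes, it suffices to check the horizontal and vertical contributions to $A_{F_j}((a,x))$ separately. The horizontal contribution is $A_{f_{(c_H(x)+j)\bmod t_G}}(a)$, and the two shifted indices $(c_H(x)+j)\bmod t_G$ and $(c_H(x)+j')\bmod t_G$ are distinct because $t\leq t_G$; pairwise star compatibility of $f_0,\ldots,f_{t_G-1}$ at $a$ then forces the horizontal contributions to $A_{F_j}((a,x))$ and $A_{F_{j'}}((a,x))$ to be disjoint. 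The vertical contribution is handled identically using $t\leq t_H$. The central technical point of the whole argument is that the cyclic shift simultaneously satisfies two injectivity conditions: one in the vertex-coloring variable (needed for each $F_j$ to be a star edge coloring, using $\chi(H)\leq t_G$ and $\chi(G)\leq t_H$) and one in the index $j$ (needed for star compatibility across the $F_j$'s, using $t\leq\min\{t_G,t_H\}$). Once the shift is chosen to respect both, everything else reduces cleanly to the already-established Theorem \ref{th:col}.
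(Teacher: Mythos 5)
Your proposal is correct and matches the paper's own proof essentially line for line: the same cyclically shifted family $F_j$ built from $(c_H(x)+j)\bmod t_G$ and $(c_G(a)+j)\bmod t_H$, the same reduction to Theorem \ref{th:col} for each individual $F_j$, and the same disjoint-palette plus distinct-shifted-index argument for pairwise compatibility. Your explicit remark that the injectivity of the shift is needed both in the vertex-color variable and in the index $j$ is a slightly more careful statement of what the paper leaves implicit, but the approach is identical.
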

\begin{proof}{
Assume that  $g_i:E(G)\rightarrow\lbrace 0,1,\ldots,k_G-1\rbrace$, $0\leq i\leq t_G-1$, are star compatible colorings of $G$ and $h_i:E(H)\rightarrow\lbrace k_G,k_G+1,\ldots,k_G+k_H-1\rbrace$, $0\leq i\leq t_H-1$, are star compatible colorings of $H$. Also, let $c_G:V(G)\rightarrow\lbrace0,1,\ldots,\chi(G)-1\rbrace$ be a proper vertex coloring of $G$ and $c_H:V(H)\rightarrow\lbrace0,1,\ldots,\chi(H)-1\rbrace$ be a proper vertex coloring of $H$. If $t=\min \lbrace t_G,t_H\rbrace$, then for each $i$,  $0\leq i\leq t-1$, we define edge coloring $f_i$ of $G\square H$ as follows.
For every edge $ab$ of $G$ and for every vertex $x$ of $H$, let $m_i(x) = (c_H(x) + i) \hspace*{-1mm} \pmod {t_G}$ and 
\begin{eqnarray*}
 f_i((a,x)(b,x))=g_{m_i(x)}(ab).
\end{eqnarray*}
Also, for every edge $xy$ of $H$ and for every vertex $a$ of $G$, let $n_i(a) = (c_G(a) + i) \hspace*{-1mm} \pmod {t_H}$ and 
\begin{eqnarray*}
 f_i((a,x)(a,y))=h_{n_i(a)}(xy).
\end{eqnarray*}
By proof of  Theorem \ref{th:col}, $f_i$'s are star edge colorings of $G\square H$. It suffices to prove that $f_i$'s  are pairwise compatible as follows. For each vertex $(a,x)$, let consider colorings $f_i$ and $f_j$, where $0\leq i<j\leq t-1$. We can easily see that 
$$A_{f_i}((a,x))=A_{g_{m_i(x)}}(a)\cup A_{h_{n_i(a)}}(x),$$
and  $$A_{f_j}((a,x))=A_{g_{m_j(x)}}(a)\cup A_{h_{n_j(a)}}(x).$$ 
Also, for $i\neq j$, we have $m_i(x)\neq m_j(x)$ and $n_i(a)\neq n_j(a)$. Therefore, $A_{g_{m_i(x)}} (a) \cap A_{g_{m_j(x)}}(a)=\emptyset$ and  $A_{h_{n_i(a)}} (x) \cap A_{h_{n_j(a)}}(x)=\emptyset$. Moreover, since the star edge colorings of $G$ and $H$ use  different sets of colors,  then $A_{g_{m_i(x)}} (a) \cap A_{h_{n_j(a)}}(x)=\emptyset$ and $A_{g_{m_j(x)}} (a) \cap A_{h_{n_i(a)}}(x)=\emptyset$. Thus, we conclude that 
$A_{f_i}((a,x))\cap A_{f_j}((a,x))=\emptyset$, as desired.
}
\end{proof}
\begin{corollary}\label{cor:Gd}
If $G$ is a $(k_G, t_G)$-star colorable graph, where $t_G\geq \chi(G)$, then $G_d=G\square G\square\ldots\square G$ \rm{($d$ times)} is $(d  k_G, t_G)$-star colorable.
\end{corollary}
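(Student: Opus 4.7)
The natural plan is induction on $d$. The base case $d=1$ is immediate from the hypothesis that $G$ is $(k_G,t_G)$-star colorable. For the inductive step, assume $G_{d-1}$ is $((d-1)k_G,t_G)$-star colorable, and write $G_d = G_{d-1}\,\square\, G$. The idea is to invoke Theorem~\ref{th:col2} on this product with the roles played by $G_{d-1}$ (as the first factor) and $G$ (as the second factor).

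To apply Theorem~\ref{th:col2}, I need to verify the two inequalities $t_G \geq \chi(G)$ and $t_G \geq \chi(G_{d-1})$. The first is given by hypothesis. For the second I will use the well-known identity $\chi(A\,\square\,B) = \max\{\chi(A),\chi(B)\}$ for the Cartesian product, which by a straightforward induction on $d$ yields $\chi(G_{d-1}) = \chi(G)$, and hence $\chi(G_{d-1}) \leq t_G$ by hypothesis. With both inequalities verified, Theorem~\ref{th:col2} delivers that $G_d$ is $\bigl((d-1)k_G + k_G,\ \min\{t_G,t_G\}\bigr) = (dk_G,t_G)$-star colorable, closing the induction.

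The only non-routine point is justifying the chromatic-number comparison $\chi(G_{d-1})\leq t_G$, but this reduces to the standard Sabidussi--Vizing formula for the chromatic number of a Cartesian product. Beyond that, the argument is a clean inductive bookkeeping of the parameters $(k,t)$, with the inductive hypothesis contributing $(d-1)k_G$ to the number of colors while the star-compatibility count $t_G$ is preserved at every step because the second factor is always $G$ itself.
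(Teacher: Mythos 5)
Your proof is correct and follows essentially the same route as the paper: induction on $d$, applying Theorem~\ref{th:col2} to the decomposition $G_d = G_{d-1}\square G$. You are in fact more careful than the paper's own proof, which silently assumes the hypothesis $t_G \geq \chi(G_{d-1})$ of Theorem~\ref{th:col2}; your appeal to the identity $\chi(A\square B)=\max\{\chi(A),\chi(B)\}$ supplies exactly the justification the paper omits.
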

\begin{proof}
{If $d=2$, then by Theorem \ref{th:col2}, graph $G_2$ is $(2k_G, t_G)$-star colorable. Now, suppose that $d>2$ and  $G_{d-1}$ is $((d-1)k_G, t_G)$-star colorable. Then using  Theorem \ref{th:col2} and by induction on $d$, we conclude that $G_d=G_{d-1}\square G$ is $(d  k_G, t_G)$-star colorable.
}
\end{proof}
In order to study the star chromatic index of the Cartesian product of paths and cycles with an arbitrary graph, in the following theorems we present some star compatible colorings of paths and cycles.
\begin{theorem}\label{per:path}
For every integers $n,r\geq2$, path $P_n$ is $(2r, r)$-star colorable.
\end{theorem}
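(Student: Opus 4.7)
The plan is to build the $r$ star edge colorings explicitly as translates of a single base coloring. I first exhibit a star edge coloring $f_0 : E(P_n)\to\{0,1,2\}$ that enjoys the parity property that the two colors incident to every internal vertex have opposite parities, and then define
\[
f_i(e) := (f_0(e) + 2i) \pmod{2r}, \qquad 0 \le i \le r-1.
\]

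Labeling the edges of $P_n$ as $e_1,\ldots,e_{n-1}$, I would set $f_0(e_j)=1$ if $j$ is odd, $f_0(e_j)=0$ if $j\equiv 2\pmod 4$, and $f_0(e_j)=2$ if $j\equiv 0\pmod 4$, yielding the periodic sequence $1,0,1,2,1,0,1,2,\ldots$. A direct check of the four possible windows of four consecutive edges shows that each uses all three colors $\{0,1,2\}$, so $f_0$ is a star edge coloring of $P_n$, and by construction the colors of consecutive edges always have opposite parities.

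Since $c\mapsto (c+2i)\pmod{2r}$ is a bijection of $\mathbb{Z}_{2r}$, each $f_i$ is obtained from $f_0$ by a color permutation, and therefore inherits both properness and the star property. For star compatibility, at an internal vertex $v_j$ write $A_{f_0}(v_j)=\{a,b\}$ with $a$ even and $b$ odd; then $A_{f_i}(v_j)=\{a+2i,\; b+2i\}\pmod{2r}$ still consists of one even and one odd residue, and as $i$ ranges over $\{0,1,\ldots,r-1\}$ the even residues $a+2i$ are pairwise distinct modulo $2r$ (and likewise the odd residues $b+2i$). Hence $A_{f_i}(v_j)\cap A_{f_{i'}}(v_j)=\emptyset$ whenever $i\neq i'$. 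The endpoint vertices are handled immediately since the sets $A_{f_i}$ reduce to distinct singletons there.

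The only real hurdle is choosing the right base coloring: one has to recognize that parity alternation along $P_n$ is exactly the condition that makes the $r$ shifts by $2i$ produce pairwise disjoint color pairs at every internal vertex, together partitioning $\mathbb{Z}_{2r}$ into the $r$ disjoint pairs $\{2i, 2i+1\}$ (up to the involution swapping the two coordinates). Once this is observed, everything else is routine verification. The small boundary cases $n\in\{2,3,4\}$ need no special treatment since the star condition is vacuous when $P_n$ contains no four consecutive edges.
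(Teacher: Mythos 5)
Your proof is correct and follows essentially the same strategy as the paper's: both construct the $r$ colorings as shifts by $2i \pmod{2r}$ of a single base star edge coloring in which the two colors at each internal vertex have opposite parities, and both derive star compatibility from the fact that even shifts preserve parity while $2i \not\equiv 2j \pmod{2r}$ for $0 \le i < j \le r-1$. The only (immaterial) difference is the choice of base coloring: the paper uses $f_0(xy)=x \pmod{2r}$, whereas you use the periodic $3$-color pattern $1,0,1,2,\ldots$.
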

\begin{proof}
{
Let $V(P_n)=\lbrace 0,1,\ldots,n-1\rbrace$ and $E(P_n)=\lbrace xy: 0\leq x\leq n-2, y=x+1\rbrace$. We define  edge colorings $f_i:E(P_n)\rightarrow\lbrace 0,1,\ldots,2r-1\rbrace$, $0\leq i\leq r-1$, as follows. For each $xy\in E(P_n)$  with  $x<y$, let
$$f_i(xy)=x+2i\hspace*{-1.5mm} \pmod{2r}.$$
 Clearly $f_i$'s are star edge colorings of $P_n$. For every $i$ and $j$, where $0\leq i<j\leq r-1$, and every $x\in V(P_n)$, we have
  \[
A_ {f_{i}}(x)\cap A_ {f_{j}}(x)  = 
     \begin{cases}
     \lbrace 2i \rbrace\cap \lbrace  2j  \rbrace  &\quad\text{if}\hspace*{2mm} x=0,\\
     \lbrace x-1+2i , x+2i \rbrace\cap \lbrace x-1+2j , x+2j  \rbrace  &\quad\text{if}\hspace*{2mm} 0<x<n-1,\\
      \lbrace n-2+2i \rbrace\cap \lbrace  n-2+2j  \rbrace  &\quad\text{if}\hspace*{2mm} x=n-1.
         \end{cases}
\] 
Since $0\leq i<j\leq r-1$, then  $2i\neq 2j\hspace*{-1mm} \pmod{2r}$. Therefore, $A_ {f_{i}}(x)\cap A_ {f_{j}}(x)  =\emptyset$, and  consequently edge colorings  $f_i$ and $f_j$ are pairwise star compatible.
}
\end{proof}

\begin{theorem}\label{per:cycle}
For every integers $n,r\geq2$, we have the following statements.
\begin{enumerate}
\rm{
\item[(i)]\label{per:cyclei} If  $n\geq 4$ is even, then  $C_n$ is $(2r, r)$-star colorable.
\item[(ii)] If $n\geq 2r+1$ is odd,  then $C_n$ is $(2r+1, r)$-star colorable.
\item[(iii)] If $n\geq 3$ is odd, then  $C_n$ is $(2r+\left\lceil \frac{2r}{n-1}\right\rceil, r)$-star colorable.
}
\end{enumerate}
\end{theorem}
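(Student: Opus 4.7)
The plan is to prove each of the three parts by explicit construction, in the spirit of the proof of Theorem~\ref{per:path}. Label $V(C_n) = \{0, 1, \dots, n-1\}$ and write $e_x$ for the edge between $x$ and $(x+1) \bmod n$, for $0 \leq x \leq n-1$.

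For part (i), I would try the direct analogue of the path formula: $f_i(e_x) = (x + 2i) \bmod 2r$ for $0 \leq i \leq r-1$. Propriety is immediate, since adjacent edges differ by $1$ in the interior and by $n-1$ at the wrap, and for $n$ even both $1$ and $n-1$ are odd, hence nonzero modulo the even number $2r$. An interior 4-path carries four consecutive residues mod $2r$, which are distinct as $2r \geq 4$. Pairwise compatibility follows from the same parity argument as in Theorem~\ref{per:path}: the two colors of $A_{f_i}(v)$ have opposite parities (consecutive edges of $C_n$ always differ by an odd amount, including at the wrap), while the shift $2(j-i)$ preserves parity and is nonzero mod $2r$, so $A_{f_i}(v) \cap A_{f_j}(v) = \emptyset$. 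The subtle step is the three wrap-around 4-paths, which span at least three distinct residues unless $n \equiv 2 \pmod{2r}$. In that exceptional residue class I would modify $f_0$ on one edge near the wrap (e.g.\ increase $f_0(e_{n-2})$ by an even amount, so that consecutive differences stay odd but the bad repetition is broken) and verify the finitely many affected 4-paths.

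For part (ii), with $n \geq 2r + 1$ odd, use $f_i(e_x) = (x + 2i) \bmod (2r+1)$. Propriety and the interior star condition follow as before, and the hypothesis $n \geq 2r + 1$ ensures that every window of four consecutive residues mod $2r + 1$ consists of four distinct colors. For compatibility, since $\gcd(2, 2r+1) = 1$ the shifts $2(j-i)$ for $0 \leq i < j \leq r - 1$ lie in $\{2, 4, \dots, 2(r-1)\}$; none equals $\pm 1 \pmod{2r + 1}$, which is precisely the condition needed so that the two-element sets $A_{f_0}(v)$ (whose interior elements differ by $\pm 1$) remain pairwise disjoint after shifting. At the wrap-around vertex $v = 0$ the elements of $A_{f_0}(0)$ differ by $n - 1$ rather than $\pm 1$, and a direct calculation shows compatibility requires $n - 1 \in \{0, 1, 2r\} \pmod{2r+1}$; when this fails I would modify the base coloring $f_0$ on a single edge near the wrap to restore the condition, then shift.

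For part (iii), with $n$ odd and possibly smaller than $2r + 1$, I would use a two-tier block construction with $k = 2r + \lceil 2r/(n-1) \rceil$ colors. Partition the $r$ colorings into $\lceil 2r/(n-1) \rceil$ groups of size at most $\lfloor (n-1)/2 \rfloor$, assign each group its own disjoint block of $n - 1$ colors (plus one extra color per block to handle the wrap), and within each group apply the part (ii)-style shift formula modulo $n - 1$ on the block. Colorings in different groups are automatically star compatible because their color palettes are disjoint, and within-group compatibility reduces to the part (ii) argument on a block of size $n - 1$. The extra color per block is the tool that breaks the bicolored wrap-around 4-path inside each block.

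The main obstacle I anticipate is the wrap-around modifications in parts (i) and (ii) whenever the clean shift formula fails: one must choose the single-edge (or few-edge) modification so as to simultaneously preserve the star edge coloring of $C_n$ and the pairwise compatibility at the handful of affected vertices, which is a finite but intricate case check. Part (iii) has an analogous but somewhat more delicate obstacle, namely coordinating the block structure and the placement of the extra colors with the wrap-around of each block so that both conditions survive.
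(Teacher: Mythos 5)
Your plan for part (i) is essentially the paper's: it uses the same shift family $f_i(e_x)=x+2i\pmod{2r}$, except that it recolors the wrap edge $e_{n-1}$ with $n+1+2i$ for \emph{every} $i$ (the bicolored wrap $4$-path occurs in each $f_i$ when $n\equiv 2\pmod{2r}$, so modifying only $f_0$ cannot suffice) and it treats $r=2$ by separate explicit patterns, since that fix needs $2r\geq 6$. The genuine gap is in part (ii). For a shift family $f_i=f_0+2i\pmod{2r+1}$, pairwise compatibility at a vertex $v$ with $A_{f_0}(v)=\{a,b\}$ forces $a-b\notin\{2k\bmod (2r+1):\ 1\leq |k|\leq r-1\}=\{2,3,\ldots,2r-1\}$, i.e.\ $a-b\equiv\pm 1\pmod{2r+1}$. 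So \emph{every} pair of consecutive edge colors around the cycle must differ by exactly $\pm1$; this is a global constraint, not a wrap-around anomaly, and a single-edge modification of the ramp $0,1,\ldots,n-1$ merely relocates the offending difference $n-1$ from vertex $0$ to an endpoint of the modified edge. Worse, the constraint is sometimes unsatisfiable by \emph{any} shift family: the $n$ increments $\pm1$ must sum to $0\pmod{2r+1}$, hence (their sum being odd) to $\pm(2r+1)$, and for $n=2r+3$ this forces exactly one minority-sign increment, whose two cyclic neighbours produce a $(+,-,+)$ or $(-,+,-)$ triple and therefore a bicolored $4$-path. Thus no ``shift of a patched base coloring'' can prove (ii).

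The paper's proof of (ii) is accordingly not a local repair but a different construction: for $r>2$ it builds $r$ genuinely distinct colorings out of interleaved monotone blocks $T_i^{b},\ldots,T_i^{1}$ and $a_i,\ldots,a_{i+2r-1}$ (placed in different phases for different $i$), with a seam color $q_i$ that, in the hardest case $b=1$, is selected via a perfect matching in an auxiliary $(r-2)$-regular bipartite graph; the case $r=2$ is again done by hand. Your part (iii) reduction to (ii) is the paper's idea, but the accounting is off: giving each of the $\lceil 2r/(n-1)\rceil$ groups a full palette of $n$ colors costs $n\lceil 2r/(n-1)\rceil$, which exceeds $2r+\lceil 2r/(n-1)\rceil$ unless $(n-1)\mid 2r$; the paper uses $a=\lceil 2r/(n-1)\rceil-1$ full groups and trims the last group to $r-ap$ colorings on only $2(r-ap)+1$ colors to hit the stated bound.
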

\begin{proof}
{
 Let  $V(C_n)=\lbrace 0,1,\ldots,n-1\rbrace$ and $E(C_n)=\lbrace xy: 0\leq x\leq n-1, y= x+1 \hspace*{-1mm} \pmod{n}\rbrace$.
\par 
\noindent \rm{(i)} Let $n\geq 4$ be an even integer. We first  consider  the case $r=2$.  Since $n$ is even, either $n=0 \hspace*{-1mm} \pmod{4}$ or $n=2 \hspace*{-1mm} \pmod{4}$. If $n=0\hspace*{-1mm} \pmod4$, then we provide two  edge colorings $f_0$ and $f_1$ of $C_n$ with the following patterns.
\begin{align*}
f_0:\nolinebreak0,1,2,3,0,1,2,3,\ldots0,1,2,3,\qquad f_1:2,3,0,1,2,3,0,1\ldots2,3,0,1.
\end{align*} 
 If $n=2\hspace*{-1mm} \pmod4$, then we provide two edge colorings $f_0$ and $f_1$ of $C_n$ with the following patterns.
 \begin{align*}
 f_0:0,1,2,3,0,1,2,3,\ldots,0,1,2,3,2,1,\qquad f_1:2,3,0,1,2,3,0,1,\ldots,2,3,0,1,0,3.
 \end{align*} 
  It is then easy to see that edge colorings $f_0$ and $f_1$ are star compatible in both cases of $n$.

 Now, we consider the case $r>2$.  We define edge colorings $f_i:E(C_n)\rightarrow\lbrace 0,1,\ldots,2r-1\rbrace$, $0\leq i\leq r-1$, as follows. For every $xy\in E(C_n)$, let
\[ f_i(xy)=
 \begin{cases}
 x+2i \hspace*{-1.5mm} \pmod {2r}&\quad \text{if}\hspace*{2mm}  0\leq x\leq n-2,\\
  n+1+2i\hspace*{-1.5mm} \pmod {2r}&\quad \text{if}\hspace*{2mm}   x= n-1.\
 \end{cases}
\]
If cycle $C_n$ under edge coloring $f_i$ has a bi-colored 4-path (4-cycle) $P:v_1v_2v_3v_4v_5$, then we must have either  $v_1=n-3$, $v_2=n-2$, $v_3=n-1$, $v_4=0$, $v_5=1$, or $v_1=n-2$, $v_2=n-1$, $v_3=0$, $v_4=1$, $v_5=2$ (the other cases are clear). In the first case, $f_i(v_1v_2)=n-3+2i\hspace*{-1mm}\pmod{2r}$, and $f_i(v_3v_4)=n+1+2i\hspace*{-1mm}\pmod{2r}$. Since $2r\geq 6$, then  $n-3\neq n+1\hspace*{-1mm} \pmod{2r}$. Therefore, $f_i(v_1v_2)\neq f_i(v_3v_4)$ and consequently $P$ is not bi-colored, which is a contradiction. In the second case,  $f_i(v_1v_2)=f_i(v_3v_4)$, and $ f_i(v_2v_3)=f_i(v_4v_5)$. It implies that $n-2+2i=2i\hspace*{-1mm} \pmod{2r}$, and $ n+1+2i=1+2i\hspace*{-1mm} \pmod{2r}$, which is a contradiction. Thus,  for every $i$, $0\leq i\leq r-1$, $f_i$ is a star edge coloring of $C_n$.
\par 
We now  show that these star edge colorings are pairwise star compatible. For every  $i$ and $j$, where $0\leq i<j\leq r-1$, and every $x\in V(P_n)$, we have
  \[
A_ {f_{i}}(x)\cap A_ {f_{j}}(x)  = 
     \begin{cases}
          \lbrace  n+1+2i,2i \rbrace\cap \lbrace  n+1+2j,2j  \rbrace  &\quad\text{if}\hspace*{2mm}  x= 0,\\
     \lbrace x-1+2i,x+2i \rbrace\cap \lbrace x-1+2j,x+2j \rbrace  &\quad\text{if}\hspace*{2mm}  1\leq x\leq n-2,\\
     \lbrace n-2+2i , n+1+2i \rbrace\cap \lbrace n-2+2j , n+1+2j  \rbrace  &\quad\text{if}\hspace*{2mm}  x= n-1.
         \end{cases}
\] 
Since $0\leq i< j\leq r-1$ and $n$ is an even integer, then $2i\neq 2j\hspace*{-1mm} \pmod{2r}$ and $n+1\hspace*{-1mm}\pmod {2r}$ is odd. Therefore, we conclude that $A_ {f_{i}}(x)\cap A_ {f_{j}}(x)  =\emptyset$, as desired.
\par 
\noindent \rm{(ii)} Let $n\geq 2r+1$ be an odd integer.  First, we consider the case $r=2$. Since $n$ is odd, either $n= 1\hspace{-1mm}\pmod{4}$, or $n= 3\hspace{-1mm}\pmod{4}$. If  $n= 1\hspace{-1mm}\pmod{4}$, then we provide two edge colorings $f_0$ and $f_1$ of $C_n$ with the following patterns.
\begin{align*}
f_0:\nolinebreak0,1,2,3,0,1,2,3,\ldots,0,1,2,3,0,1,2,4,3,\qquad f_1:4,3,0,1,2,3,0,1,2,\ldots,3,0,1,2.
\end{align*}  
 If $n= 3\hspace*{-1mm} \pmod 4$, then we provide two edge colorings $f_0$ and $f_1$ of $C_n$ with the following patterns.
 \begin{align*}
 f_0:0,1,2,3,0,1,2,3,\ldots,0,1,2,3,0,4,2, \qquad f_1:4,3,0,1,2,3,0,1,2,3\ldots,0,1,2,3,1.
 \end{align*}   
It is then easy to see that in each case edge colorings $f_0$ and $f_1$ are star compatible.

Now, we  consider the case $r>2$. Assume that $n-1 = 2rp + u$, where $u\in\nolinebreak\lbrace0,2,4,\ldots,2(r-1)\rbrace$ and $p\geq1$. Let
 $$\displaystyle b=\displaystyle\nolinebreak\frac{n-1-2r}{2}=\displaystyle\nolinebreak\frac{2r(p-1)+u}{2}.$$
  For $0\leq i\leq r-1$, we define   ordered $(b+1)$-tuples $T_i$ (each entry in the tuples represents a color), as follows.  For $0\leq i\leq r-2$, let
 $$T_i=(2r-2i-2,2r-2i-1,\ldots,2r,1,2,\ldots,2r,1,2,\ldots,2r,\ldots).$$
 Also, let define
 $$T_{r-1}=(2r,1,2,\ldots,2r,1,2,\ldots,2r,\ldots).$$

 We denote the $l$-th entry of $T_i$  by $T_i^l$.  For every $i$, $0\leq i\leq r-1$,  we provide edge coloring $f_i$ of $C_n$ with the following pattern. Let 
 $$f_0: T_0^{b},T_0^{b-1},\ldots,T_0^1,a_0,a_{1},\ldots,a_{2r-1},T_{r-1}^1,T_{r-1}^2,\dots T_{r-1}^{b},q_0.$$
 For $1\leq i\leq r-1$, let
 $$f_i: T_i^{b},T_i^{b-1},\ldots,T_i^1,a_i,a_{i+1},\ldots,a_{i+2r-1},T_{i-1}^1,T_{i-1}^2,\dots T_{i-1}^{b},q_i.$$
%
In this pattern, $a_i=(2r-1)i,a_{i+1}=(2r-1)i+1,\ldots,a_{i+2r-1}=(2r-1)(i+1)$ (arithmetics are done modulo  $2r+1$) and  $q_i$ is determined as follows, where $0\leq i\leq r-1$.
\par 
 If $b=0$, then $q_i$ is $a_{i+2r}=(2r-1)(i+1)+1\hspace*{-1mm} \pmod{2r+1}$.  If   $b>1$, then $q_i$ is  $T_i^{b+1}$. If $b=1$, then $q_i$ is adjacent to $T_{i-1}^1$ and $T_{i}^1$. Since  $T_{i-1}^1$ and $T_{i}^1$  are even, it is reasonable to choose  $q_i$ from $S_i=\lbrace 1,3,\ldots,2r-1\rbrace\setminus \lbrace a_i,a_{i+2r-1}\rbrace$. 
Now, to determine  the value of $q_i$, we describe a  bipartite graph $G(X,Y)$, as follows. Let   $X=\lbrace S_0,S_1,\ldots,S_{r-1}\rbrace$, $Y=\lbrace1,3,\ldots,2r-1\rbrace$, and vertex  $S_\alpha$ is adjacent to
vertex $s_\beta$ if and only if $s_\beta\in S_\alpha$; except $S_0$ to 1. Note that $G(X,Y)$ is a $(r-2)$-regular bipartite graph and therefore has a perfect matching.  Then, in a perfect matching of $G(X,Y)$, we take the label of the vertex that is matched to $S_i$ as $q_i$.
\par
It is easy to see that $f_i$'s are proper edge colorings of $C_n$. To prove that for every $i$, $0\leq i\leq r-1$, $f_i$ is a star edge coloring, it suffices to show that every 4-path $P$ is not bi-colored. For this purpose, we only consider the following cases for $P$. In other cases, it is easy to see that $P$ is not bi-colored.
\par If $P$ is  bi-colored  with colors   $T_i^2,T_i^1,a_i,a_{i+1}$, or $q_i,T_i^1,a_i,a_{i+1}$, then $a_{i+1}=T_i^1$. Therefore, $(2r-1)i+1=2r-2i-2\hspace*{-1mm} \pmod{2r+1}$. Hence, $(2r+1)i=2r-3\hspace*{-1mm} \pmod{2r+1}$, which is a contradiction.
\par If $P$ is  bi-colored  with colors  $a_{i+2r-2},a_{i+2r-1},T_{i-1}^1,T_{i-1}^2$, or $a_{i+2r-2},a_{i+2r-1},T_{i-1}^1,q_i$, then $a_{i+2r-2}=T_{i-1}^1$. Therefore, $(2r-\nolinebreak1)(i+1)-1=2r-2(i-1)-2\hspace*{-1mm} \pmod{2r+1}$. Hence,  $(2r+1)(i+1)=2\hspace*{-1mm} \pmod{2r+1}$, which is a contradiction.
 \par If $P$ is  bi-colored  with colors  $a_{i+2r-1},T_{i-1}^{1},q_i,T_i^1$, or $T_{i-1}^{1},q_i,T_i^1,a_i$, then $T_{i-1}^1=T_i^1$. Therefore,  $2r-2(i-\nolinebreak1)-2= 2r-2i-2\hspace*{-1mm} \pmod{2r+1}$, which is a contradiction.
 \par If $P$ is  bi-colored  with colors $a_{i+2r-1},q_i,a_i,a_{i+1}$, or $a_{i+2r-2},a_{i+2r-1},q_i,a_i$, then $a_{i+2r-1}=a_i$. Therefore,  $(2r-\nolinebreak1)(i+1)= (2r-1)i\hspace*{-1mm} \pmod{2r+1}$. Hence, $2r-1=0\hspace*{-1mm} \pmod{2r+1}$, which is a contradiction.
 \par 
  If $b>1$ and  $P$ is  bi-colored   with colors $T_{i-1}^b,q_i,T_i^b,T_i^{b-1}$, or  $T_{i-1}^{b-1},T_{i-1}^{b},q_i,T_i^b$, then $T_{i-1}^{b}=T_i^b$, which is a contradiction.

  \par 
  Thus, for every $i$, where $0\leq i\leq r-1$, $f_i$ is a star edge coloring. Now,  we  show that  $f_i$'s, are pairwise  star compatible. Let $c=n-b$, and for every vertex $x\in V(C_n)$, $d_x=x-b$. Thus, for every integers  $i$ and $j$, where   $0 \leq i<j\leq r-1$, we have
\[
A_ {f_{i}}(x)\cap A_ {f_{j}}(x)  = 
     \begin{cases}
     \vspace*{.5mm}
     \lbrace a_{i},a_{i+2r}  \rbrace\cap \lbrace a_{j},a_{j+2r}   \rbrace  &\quad\text{if}\hspace*{2mm} x=0,b=0\\ 
     \lbrace T_{i}^{b},q_{i}  \rbrace\cap \lbrace T_{j}^{b},q_{j}  \rbrace  &\quad\text{if}\hspace*{2mm} x=0,b>0,\\ 
      \lbrace T_{i}^{b-x},T_{i}^{b-(x-1)}\rbrace\cap \lbrace T_{j}^{b-x},T_{j}^{b-(x-1)}\rbrace &\quad\text{if}\hspace*{2mm}  0<x<b, \\
     \lbrace T_{i}^1,a_{i}  \rbrace\cap  \lbrace T_{j}^1,a_{j}  \rbrace &\quad\text{if}\hspace*{2mm} x=b,b>0, \\
       \lbrace a_{i+d_x-1}, a_{i+d_x}\rbrace\cap  \lbrace a_{j+d_x-1}, a_{j+d_x} \rbrace &\quad\text{if}\hspace*{2mm} b<x<c-1, \\
        \lbrace a_{i+2r-1},q_i\rbrace\cap  \lbrace a_{j+2r-1},q_j\rbrace &\quad\text{if}\hspace*{2mm}  x= c-1,b=0, \\
       \lbrace a_{i+2r-1},T_{i-1}^1\rbrace\cap  \lbrace a_{j+2r-1},T_{j-1}^1\rbrace &\quad\text{if}\hspace*{2mm}  x= c-1,b>0, \\
        \lbrace T_{i-1}^{x-c+1},T_{i-1}^{x-c+2}\rbrace\cap \lbrace T_{j-1}^{x-c+1},T_{j-1}^{x-c+2}\rbrace  &\quad\text{if}\hspace*{2mm}  c\leq x<n-1,b>1, \\
         \lbrace T_{i-1}^{b},q_i\rbrace\cap \lbrace T_{j-1}^{b},q_j\rbrace  &\quad\text{if}\hspace*{2mm}  x=n-1,b>0. \
     \end{cases}
\] 
For every $s$,  $0\leq s\leq r-1$, it is obvious that $|T_{i}^s-T_{j}^s|\geq 2$ is even and $|a_{i}-a_{j}|\geq2$. Thus, we have $A_ {f_{i}}(x)\cap A_ {f_{j}}(x)=\emptyset$.    As an example, in Figure \ref{comp1}, three pairwise star compatible colorings of  $C_{15}$ by seven colors are shown. Here, $T_0=(4,5,6,1,2)$, $T_1=(2,3,4,5,6)$, $T_2=(6,1,2,3,4)$, and $b=4$, $p=2$, and $u=2$.
\begin{figure}[!ht]
\begin{center}
\includegraphics[scale=.7]{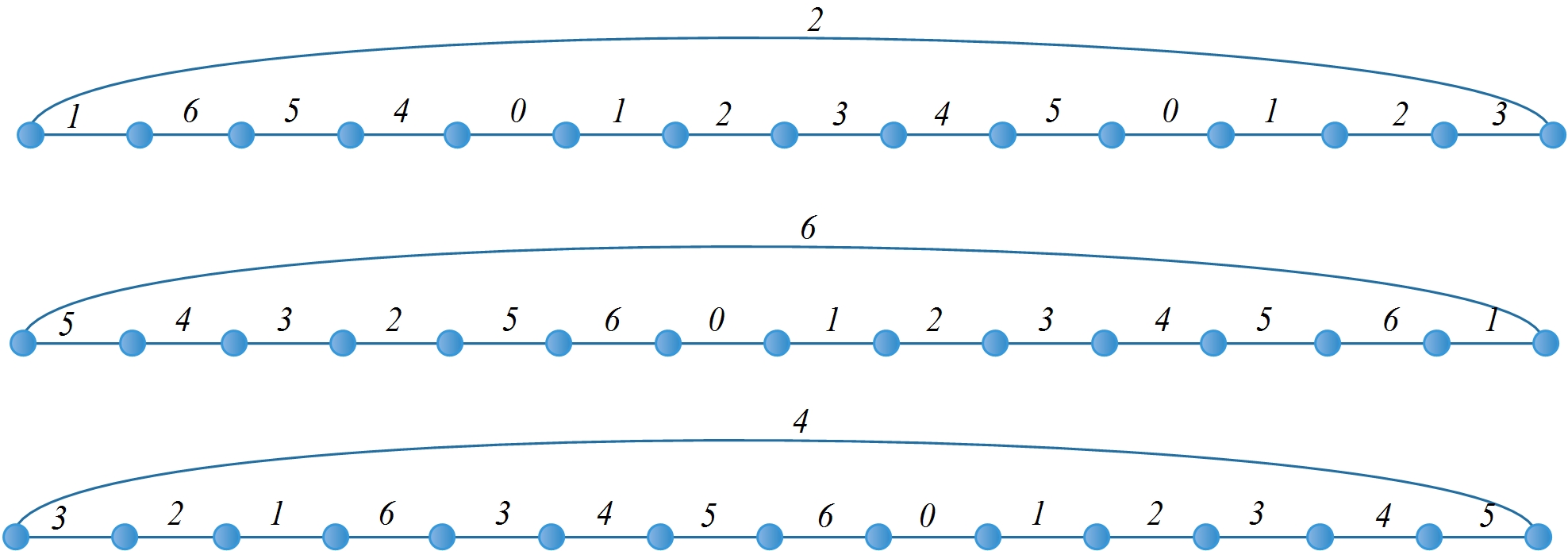}\\
\caption{Three compatible star edge colorings of  $C_{15}$.}
\label{comp1}
\end{center}
\end{figure}  
\par 
\noindent\rm{(iii)}
If $n\geq 2r+1$, then by assertion~(ii) we are done. Thus, let $3\leq n=2p+1<2r+1$ and $a=\left \lceil\frac{2r}{n-1}\right\rceil-1$. We show that $C_n$ is $(2r+1+a,r)$-star colorable. By applying assertion~\rm{(ii)}, $a$ times, we can provide  $ap$ pairwise star compatible colorings of $C_n$ with $(2p+1)a$ colors. Note that each set of  $p$ pairwise star compatible colorings uses $2p+1$ new colors.  Since 
$$2r+1+a-a(2p+1)=2(r-ap)+1\leq n,$$
by assertion~\rm{(ii)},  we can present $(r-ap)$  pairwise star compatible colorings with $2(r-ap)+1$ colors. Therefore,  we provide $ap+(r-ap)=r$ pairwise star compatible colorings of $C_n$ with $(2p+1)a+2(r-ap)+1=2r+a+1$ colors, as desired.
}
\end{proof}

By Theorem \rm{\ref{th:col}} and \rm{\ref{per:cycle}}, we have the following corollary.
\begin{corollary}\label{cor:comp}
For every graph $G$ and a positive integer $n$, we have the following statements.
\begin{enumerate}
\item[\rm{(i)}]\label{i}  If $n\geq2$, then $\chi^\prime_s(G\square P_n)\leq \chi^\prime_s( G\square C_{2n})\leq  \chi^\prime_s(G)+2\chi (G)$.
\item[\rm{(ii)}] \label{ii} If $n\geq 2\chi(G)+1$ is odd, then $\chi^\prime_s(G\square C_n)\leq \chi^\prime_s(G)+2\chi (G)+1$.
\item[\rm{(iii)}] \label{iii} If $n\geq 3$ is odd, then $\chi^\prime_s(G\square C_n)\leq \chi^\prime_s(G)+2\chi (G)+\left\lceil \frac{2\chi(G)}{n-1}\right\rceil\leq\chi^\prime_s(G)+2\chi (G)+3$.
\end{enumerate}
\end{corollary}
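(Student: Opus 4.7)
The plan is to apply Theorem \ref{th:col} in each of the three cases, feeding it the star compatible colorings of paths and cycles supplied by Theorem \ref{per:cycle}, and exploiting the fact that the Cartesian product is commutative so that the factor $G$ can play the role of $H$ in that theorem. In every part I would set $r=\chi(G)$ in the relevant clause of Theorem \ref{per:cycle}; this produces exactly $\chi(G)$ pairwise star compatible colorings of the path or cycle, so the hypothesis $t_G\geq\chi(H)$ of Theorem \ref{th:col} is automatically satisfied (with our cycle/path playing the role of $G$ in that theorem and our graph $G$ playing the role of $H$).

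For part (i), I would first observe that $G\square P_n$ is isomorphic to the subgraph of $G\square C_{2n}$ induced by $V(G)\times S$, where $S$ is any set of $n$ consecutive vertices of $C_{2n}$. Since the star chromatic index is monotone under taking subgraphs (the restriction of any star edge coloring is a star edge coloring), this gives $\chi^\prime_s(G\square P_n)\leq \chi^\prime_s(G\square C_{2n})$. For the second inequality, under the assumption $\chi(G)\geq 2$ (the edgeless case being trivial), Theorem \ref{per:cycle}(i) with $r=\chi(G)$ shows that $C_{2n}$ is $(2\chi(G),\chi(G))$-star colorable, so Theorem \ref{th:col} immediately yields $\chi^\prime_s(G\square C_{2n})\leq 2\chi(G)+\chi^\prime_s(G)$.

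For part (ii), since $n\geq 2\chi(G)+1$ is odd, Theorem \ref{per:cycle}(ii) with $r=\chi(G)$ makes $C_n$ into a $(2\chi(G)+1,\chi(G))$-star colorable graph, and a single invocation of Theorem \ref{th:col} yields the stated bound. For part (iii), the same recipe applied to Theorem \ref{per:cycle}(iii) with $r=\chi(G)$ delivers the first inequality, and the coarser bound $\chi^\prime_s(G)+2\chi(G)+3$ then follows from an elementary estimate on $\lceil 2\chi(G)/(n-1)\rceil$.

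No step presents a genuine obstacle; the proof is a direct assembly of the two previously established results. The only care needed is the routine bookkeeping that the hypothesis $t_G\geq\chi(H)$ of Theorem \ref{th:col} is met after substituting $r=\chi(G)$, together with verifying the small corner case where $G$ is edgeless (so that the condition $r\geq 2$ of Theorem \ref{per:cycle} is not formally met), which is handled separately since then $G\square H$ is just a disjoint union of copies of $H$.
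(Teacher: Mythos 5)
Your proof is correct and is exactly the argument the paper intends: the corollary is stated there without any written proof, as an immediate consequence of Theorem~\ref{th:col} combined with Theorem~\ref{per:cycle} applied with $r=\chi(G)$, using commutativity of $\square$ so that the path or cycle plays the role of the $(k_G,t_G)$-star colorable factor and $G$ plays the role of $H$, plus the subgraph monotonicity $\chi^\prime_s(G\square P_n)\leq\chi^\prime_s(G\square C_{2n})$. Your extra bookkeeping in fact exposes two defects of the statement as printed rather than of your derivation: for edgeless $G$ the bound in (i) is simply false (one gets $3\not\leq 2$), so the corollary silently assumes $\chi(G)\geq 2$; and in (iii) the final step $\left\lceil 2\chi(G)/(n-1)\right\rceil\leq 3$ fails for $n=3$ when $\chi(G)>3$, so the trailing inequality does not follow from an elementary estimate in general.
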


\section{CARTESIAN PRODUCT OF PATHS AND CYCLES}\label{paths and cycles}
 In this section, we  study the star chromatic index of grids, hypercubes, and toroidal grids. 
First, in Theorem~\ref{th4}, we obtain the star chromatic index of 2-dimensional grids and then, we  extend this result in order to get an upper  bound on the star chromatic index of $d$-dimensional grids, where $d\geq 3$. 
Also, we determine the star chromatic index of   $P_m\square C_n$    for some cases and present some upper bounds  for the rest of the cases. Applying these upper bounds, we obtain upper bounds for star chromatic index of $d$-dimensional hypercubes and $d$-dimensional toroidal grids.
\begin{theorem}\label{th4}
For two paths $P_m$ and $P_n$ with  $m,n\geq 2$, we have
\[   
\chi^\prime_s(P_m\square P_n) = 
     \begin{cases}
       3 &\quad\text{if}\hspace*{2mm} m=n=2,\\
      4 &\quad\text{if}\hspace*{2mm} (m=2,n\geq3)\hspace*{2mm}\text{or}\hspace{2mm} (m\geq3,n=2), \\
       5 &\quad\text{if}\hspace*{2mm} (m\in\lbrace3,4\rbrace, n=3)\hspace*{2mm}\text{or}\hspace{2mm}( m=3, n\in\lbrace3,4\rbrace),\\
       6 &\quad\text{otherwise.} \ 
     \end{cases}
\]
\end{theorem}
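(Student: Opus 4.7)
The plan is to establish matching upper and lower bounds in each of the four branches of the case split.

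For the upper bounds, I would give explicit constructions. The case $m=n=2$ is immediate: $P_2\square P_2\cong C_4$, so Proposition~\ref{p1} gives $\chi^\prime_s=3$. For $m=2,\ n\geq 3$ (and its symmetric twin) I would exhibit a periodic $4$-star edge coloring of the ladder, checking separately the three possible shapes of a $4$-path (rail-only, rail-rung-rail-rail, and rung-rail-rung-rail). For the three exceptional pairs $(m,n)\in\{(3,3),(3,4),(4,3)\}$ I would display an explicit $5$-coloring in a small table. For all remaining pairs, I would construct an explicit periodic $6$-coloring; the generic bound of Corollary~\ref{cor:comp}(i) only yields $\chi^\prime_s(P_m)+2\chi(P_m)\leq 7$ whenever $m\geq 5$, so a sharper ad hoc construction is needed. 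My plan here is to strengthen the $(4,2)$-star colorability of $P_m$ from Theorem~\ref{per:path} by carefully designing two star compatible colorings of $P_m$ whose interaction with a fixed $2$-coloring of the other factor avoids the slack that inflates the generic estimate to $7$, thereby saving one color.

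For the lower bounds I would argue by contradiction. Two colors on $C_4$ already produce a bi-colored $4$-cycle, so $\chi^\prime_s(P_2\square P_2)\geq 3$. For $m=2,\ n\geq 3$, assuming a proper $3$-edge-coloring forces every interior degree-$3$ vertex to use all three colors; propagating the unique forced assignments from any column to the next yields a $4$-path zig-zagging between a rail and the adjacent rail (through a rung) whose edges alternate between only two colors. For $(3,3),(3,4),(4,3)$, I would analyze the degree-$4$ vertex at the center together with the four incident unit $4$-cycles to rule out any proper $4$-edge-coloring that is also star.

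The main obstacle is showing $\chi^\prime_s(P_m\square P_n)\geq 6$ in the remaining cases. The plan is to locate a small subgrid (e.g.\ $P_3\square P_5$, $P_5\square P_3$, or $P_4\square P_4$) that embeds into every grid not already covered by the earlier cases, and to prove that this subgrid admits no $5$-star edge coloring. Fixing a corner $2\times 2$ window and enumerating its proper colorings up to symmetry, one propagates the constraints -- that every unit $4$-cycle must use at least three colors and that every L-shaped or Z-shaped $4$-path traversing two rows and two columns must likewise use at least three -- along a row of adjacent unit squares. I expect each branch to force a conflict within a bounded region. This local case analysis, together with the branching from the degree-$4$ vertices, is the most intricate part of the argument and will be where the bulk of the technical work lies.
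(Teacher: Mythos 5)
Your proposal follows essentially the same route as the paper: the exceptional small cases are settled by exhibiting explicit colorings and by finite case analysis, the general upper bound of $6$ comes from an explicit periodic coloring (the paper writes down a direct formula $f_{m,n}$ rather than routing through the compatibility machinery, since Theorem~\ref{th:col} with Theorem~\ref{per:path} only yields $7$ when $m,n\geq 5$), and the lower bound of $6$ is obtained exactly as you plan, by showing that the minimal grids $P_4\square P_4$ and $P_5\square P_3$ admit no $5$-star edge coloring via enumeration and propagation from a small window (the paper propagates from the essentially unique $5$-star edge coloring of $P_4\square P_3$). The approach is sound and matches the paper's.
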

\begin{proof}{ Let $V(P_m\square P_n)=\lbrace (i,j):0\leq i\leq m-1, 0\leq j\leq n-1\rbrace$. If $m=n=2$, then $\chi^\prime_s(P_m\square P_n)=\chi^\prime_s(C_4)=3$. By symmetry, we  consider the following cases.
\par 
 \noindent\textbf{Case 1: }  $m=2$ and $n\geq 3$.\\
  It is not difficult to see that there is no 3-star edge coloring of $P_2\square P_3$. Hence, for every $n\geq 3$, $ \chi^\prime_s(P_2\square P_n)\geq \chi^\prime_s(P_2\square P_3)>3$. 
Now,  we give a 4-star edge coloring of $P_2\square P_n$, where $n\geq 3$. Consider the edge coloring $f_{2,n}$ of $P_2\square P_n$ as follows. For every $j$, $0\leq j\leq n-2$, let
\[f_{2,n}((i,j)(i,j+1))=\begin{cases}
j \hspace*{-1mm}\pmod{4}&\quad\text{if}\hspace*{2mm}i=0,\\
j+3\hspace*{-1mm}\pmod{4} &\quad\text{if}\hspace*{2mm}i=1.
\end{cases}
\]
For every $j$, $0\leq j\leq n-1$, let
$$f_{2,n}((0,j)(1,j))=j+1 \hspace*{-1mm}\pmod{4}.$$
Since $f_{2,n}$ has a repeating pattern, to see that  there is  no bi-colored 4-path (4-cycle)  in $P_2\square P_n$, it suffices to check that $f_{2,7}$ is a star edge coloring. The edge  coloring $f_{2,7}$ is shown in Figure \rm{\ref{fig2}}, that is  clearly a 4-star edge coloring. Therefore,  for every $n\geq 3$, $\chi^\prime_s(P_2\square P_n)=4$.
 \begin{figure}[!ht]\centering
\includegraphics[scale=.9]{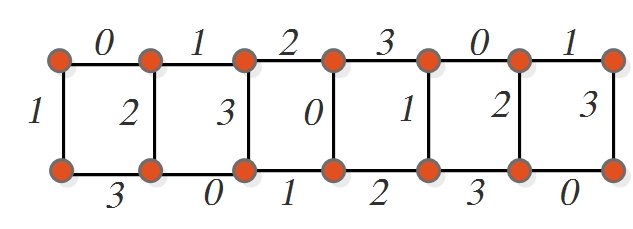}
\caption{4-star edge colorings of  $P_2\square P_7$.}
\label{fig2}
\end{figure}
\par \noindent\textbf{Case 2: }$m\in\lbrace3,4\rbrace$ and $n=3$.\\
By checking all possibilities, it can be seen that there is no 4-star edge coloring of $P_3\square P_3$. Therefore, $\chi^\prime_s(P_3\square P_3)>4$. In Figure \rm{\ref{fig4_3a}}, a 5-star edge coloring of $P_3\square P_3$ is presented. Thus,  $\chi^\prime_s(P_3\square P_3)=5$. Since $\chi^\prime_s(P_4\square P_3)\geq\chi^\prime_s(P_3\square P_3)= 5$ and in Figure \rm{\ref{fig4_3b}} a 5-star edge coloring of $P_4\square P_3$ is presented, we conclude   $\chi^\prime_s(P_4\square P_3)=5$.
 \begin{figure}[!ht]\centering
\subfigure[$P_3\square P_3$]{\includegraphics[scale=.85]{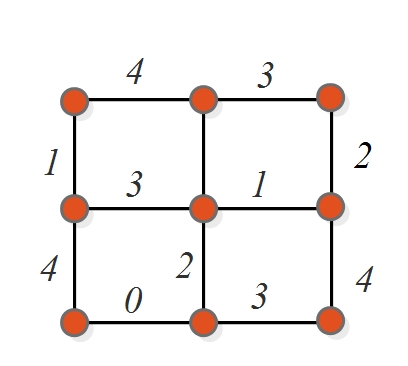}\label{fig4_3a}}
\subfigure[$P_4\square P_3$ ] {\includegraphics[scale=.9]{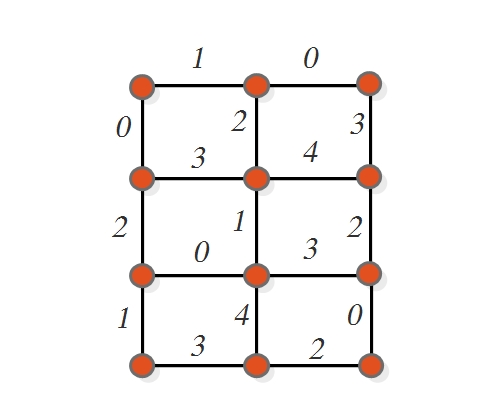}\label{fig4_3b}}
\caption{5-star edge colorings of $P_3\square P_3$ and $P_4\square P_3$.}
\end{figure}
%
%
\par 
\noindent\textbf{Case 3:}  $m=n=4$ or ($m\geq 5$ and $n\geq 3$).\\
In this case,  we first show that $\chi^\prime_s(P_m\square P_n)\geq 6$. For this purpose, we investigate all possible 5-star edge coloring of $P_4\square P_3$ and then we show it is impossible to extend these edge colorings to a 5-star edge coloring of $P_m\square P_n$, when $m=n=4$ or $m\geq 5$, $n\geq 3$. Consider path $P:(1,0)(0,0)(0,1)(0,2)$ in $P_4\square P_3$. 
In a 5-star edge coloring of this graph, edges $(1,0)(0,0)$  and $(0,1)(0,2)$ are either have the same color or not. It can be checked  that in each case, there is only one 5-star edge coloring of $P_4\square P_3$, as shown in  Figure~\rm{\ref{fig8-56}}.
\begin{figure}[!ht]\centering
\subfigure[]{\includegraphics[scale=.85]{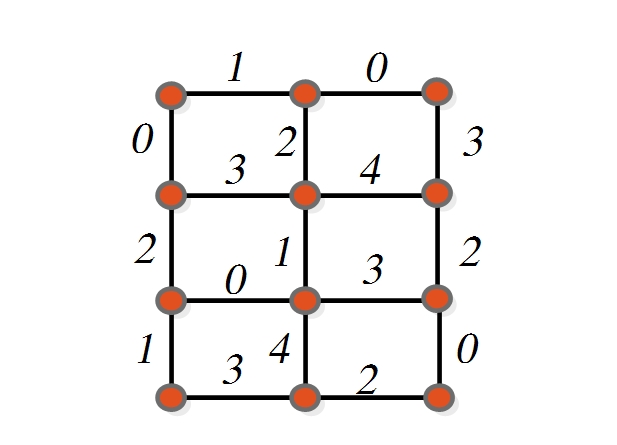}\label{fig8-6-2}}
\subfigure[] {\includegraphics[scale=.85]{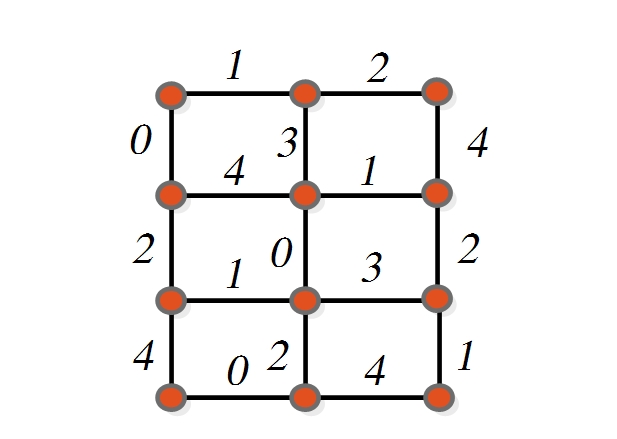}\label{fig8-5-2}}
\caption{All possibilities for having a 5-star edge coloring of $P_4\square P_3$.}
\label{fig8-56}
\end{figure}
\par 
It should be note that by applying permutation $(4,0,1,3)$ on the colors of the 5-star edge coloring of $P_4\square P_3$ in Figure \ref{fig8-5-2},  there exists an automorphism of $P_4\square P_3$  that preserves the edge colors. Hence, we conclude that up to isomorphism, there exists a unique 5-star edge coloring of $P_4\square P_3$.
\par  Now in order to have a 5-star edge coloring of graph $P_m\square  P_n$, when $m=n=4$, or $m\geq5$ and $n\geq 3$, we try to  extend the coloring of $P_4\square P_3$ to a 5-star edge coloring of $P_5\square P_3$ or $P_4\square P_4$. In Figure~\rm{\ref{fig6}}, all possibilities to obtain the  desired colorings are investigated. It turns out that it is impossible to have such a coloring and therefore in this case $\chi^\prime_s(P_m\square P_n)>5$. 
\begin{figure}[!ht]\
\begin{center}
\includegraphics[scale=.8]{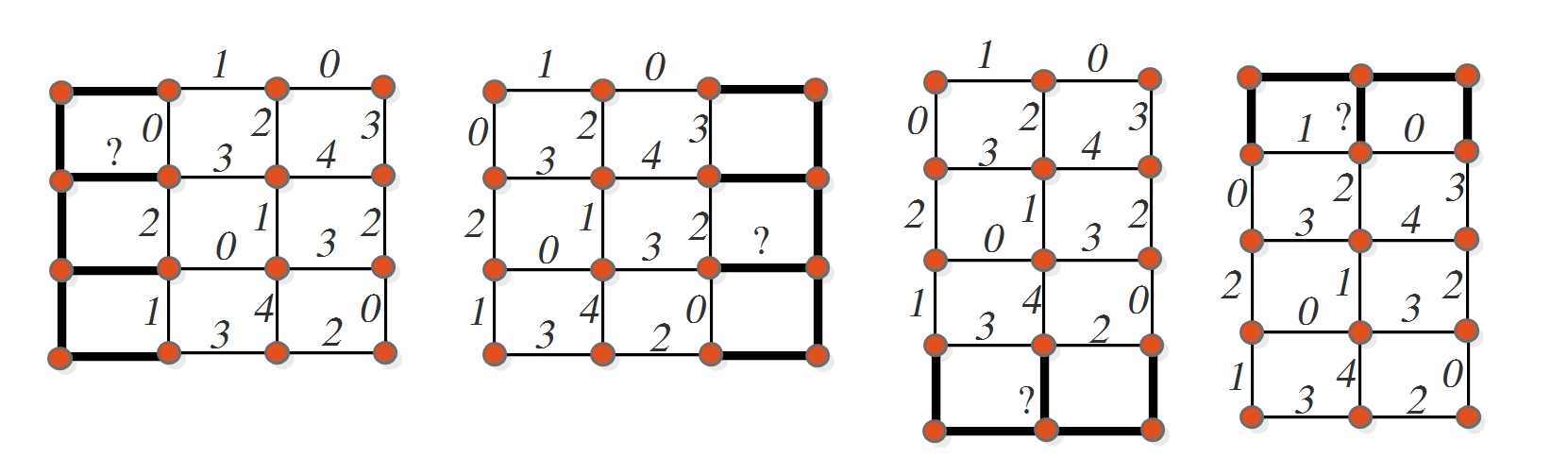}\\
\caption{There is no 5-star edge coloring of $P_4\square P_4$ and $P_5\square P_3$.}
\label{fig6}
\end{center}
\end{figure}
\par 
We now define edge coloring $f_{m,n}:E(P_m\square P_n)\rightarrow\lbrace 0,1,\ldots,5\rbrace$  as follows. For every  $i$ and  $j$, where    $0\leq i\leq m-2$, and $0\leq j\leq n-1$, we have 
\[f_{m,n}((i,j)(i+1,j))=
\begin{cases}
i \hspace*{-1.5mm} \pmod{4}&\quad\text{if}\hspace*{2mm} j=0\hspace*{-1.5mm} \pmod{2},\\
i+3 \hspace*{-1.5mm} \pmod{4}&\quad\text{if}\hspace*{2mm} j=1\hspace*{-1.5mm} \pmod{2}.
\end{cases}
\]
For every  $i$ and  $j$, where    $0\leq i\leq m-1$, and $0\leq j\leq n-2$, we have 
\[f_{m,n}((i,j)(i,j+1))=
\begin{cases}
4+(i\hspace*{-1.5mm} \pmod{2})&\quad\text{if}\hspace*{2mm} j=1 \hspace*{-1.5mm} \pmod{4},\\
5-(i\hspace*{-1.5mm} \pmod{2})&\quad\text{if}\hspace*{2mm} j=3 \hspace*{-1.5mm} \pmod{4},\\
i+1\hspace*{-1.5mm}\pmod{4}&\quad\text{otherwise.}
\end{cases}
\]
Since  $f_{m,n}$ has a repeating pattern, it suffices to check that $f_{7,6}$ is a 6-star edge coloring. The edge coloring $f_{7,6}$  is shown in Figure \ref{fig7}; we can see that there is no bi-colored 4-path (4-cycle) in $P_7\square P_6$. 
 \begin{figure}[!ht]\
\begin{center}
\includegraphics[scale=.8]{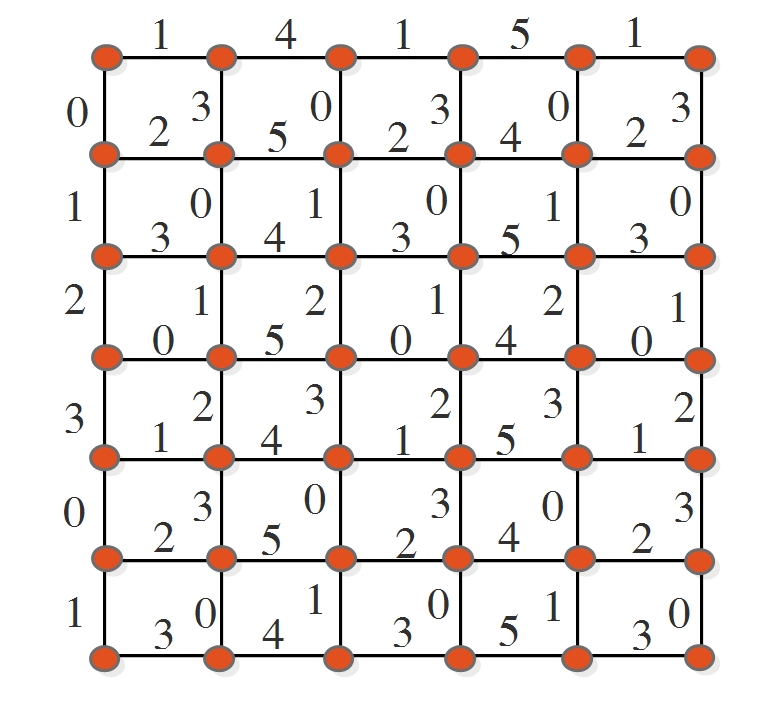}\\
\caption{A 6-star edge coloring  of $P_7\square P_6$.}
\label{fig7}
\end{center}
\end{figure}
}
\end{proof}
By  Corollary \rm{\ref{cor:Gd}}, Theorem \ref{th:col}, \rm{\ref{per:path}}, and  \ref{th4}, we can obtain an upper bound on the star chromatic index of $d$-dimensional grids as follows.

\begin{corollary}\label{cor1}
 If $G_{l_1,l_2,\ldots,l_d}$ is a d-dimensional grid, where $d\geq  2$, then
\begin{equation*}
\chi^\prime_s(G_d )\leq 4d-2.
\end{equation*}
Moreover,  for $d=2$ and $l_1,l_2\geq 4$, this bound is tight. 
\end{corollary}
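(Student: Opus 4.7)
The plan is to prove the upper bound by induction on $d$, using Theorem~\ref{th4} for the base case and Theorem~\ref{th:col} together with Theorem~\ref{per:path} for the inductive step. For $d=2$ the $2$-dimensional result of Theorem~\ref{th4} immediately gives $\chi^\prime_s(G_{l_1,l_2})\leq 6=4(2)-2$ for all $l_1,l_2\geq 2$, which settles the base case.

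For the inductive step, I would write $G_d = P_{l_d}\square G_{d-1}$, where $G_{d-1}=G_{l_1,\ldots,l_{d-1}}$, and invoke Theorem~\ref{per:path} with $r=2$ to conclude that $P_{l_d}$ is $(4,2)$-star colorable. Since every grid is bipartite, $\chi(G_{d-1})=2$, so the hypothesis $t_{P_{l_d}}=2\geq\chi(G_{d-1})$ of Theorem~\ref{th:col} is met. Applying that theorem and the inductive hypothesis then yields
\[
\chi^\prime_s(G_d)\leq 4+\chi^\prime_s(G_{d-1})\leq 4+\bigl(4(d-1)-2\bigr)=4d-2,
\]
which closes the induction.

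Tightness when $d=2$ and $l_1,l_2\geq 4$ follows immediately from Theorem~\ref{th4}, since any such pair lies in the ``otherwise'' branch of that case analysis and hence satisfies $\chi^\prime_s(P_{l_1}\square P_{l_2})=6=4(2)-2$. There is no substantial technical obstacle in this argument; the only point worth flagging is that a naive iteration of Corollary~\ref{cor:Gd} applied to the $(4,2)$-star colorability of each factor $P_{l_i}$ would only produce the weaker bound $4d$, so the saving of $2$ comes precisely from anchoring the induction at dimension~$2$ via the sharp result of Theorem~\ref{th4} rather than at dimension~$1$.
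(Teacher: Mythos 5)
Your argument is correct. It uses the same key ingredients as the paper --- the bound $\chi^\prime_s(P_{l_1}\square P_{l_2})\leq 6$ from Theorem~\ref{th4}, the $(4,2)$-star colorability of paths from Theorem~\ref{per:path}, and Theorem~\ref{th:col} --- but assembles them by a different decomposition. The paper writes $G_d = G_{d-2}\square(P_{l_1}\square P_{l_2})$, invokes Corollary~\ref{cor:Gd} (hence Theorem~\ref{th:col2}) to make the $(d-2)$-dimensional factor $(4(d-2),2)$-star colorable, and applies Theorem~\ref{th:col} exactly once, giving $4(d-2)+6=4d-2$. You instead peel off one path per step, applying Theorem~\ref{th:col} with $G=P_{l_d}$ and $H=G_{d-1}$ (the hypothesis $t_G=2\geq\chi(G_{d-1})=2$ holds since grids are bipartite) and inducting on the ordinary star chromatic index of $G_{d-1}$, giving $4+(4(d-1)-2)=4d-2$. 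Your route is marginally lighter, since it never needs the fact that star compatibility is preserved under Cartesian products; the paper's route has the side benefit of recording that $G_{d-2}$ is $(4(d-2),2)$-star colorable, which is the kind of intermediate information it reuses elsewhere (compare Corollary~\ref{col:tor}). Your closing remark about why anchoring the recursion at dimension~$2$ rather than dimension~$1$ is what saves the additive constant is exactly the right observation, and the tightness claim for $d=2$, $l_1,l_2\geq 4$ is read off from Theorem~\ref{th4} in both treatments.
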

\begin{proof}
{ If $d=2$, then by Theorem \rm{\ref{th4}}, $\chi^\prime_s(G_2)=\chi^\prime_s(P_{l_1}\square P_{l_2})\leq 6$. Now, assume that $d>2$.
By  Corollary \rm{\ref{cor:Gd}} and Theorem \rm{\ref{per:path}},  $G_{d-2}$ is $(4(d-2),2)$-star colorable. Thus by Theorem \rm{\ref{th:col}} and 6, we conclude that 
$$\chi^\prime_s(G_d)\leq4(d-2)+\chi^\prime_s(P_{l_{1}}\square P_{l_2})\leq 4d-2.$$
}
\end{proof}

In the following theorem, we consider the star chromatic index of the Cartesian product of paths and cycles.
\begin{theorem}\label{th8}
For path $P_m$ and cycle $C_n$,  we have
\[   
\chi^\prime_s(P_m\square C_n) = 
     \begin{cases}
       4 &\quad\text{if}\hspace*{2mm} m=2,n=4,\\
      5 &\quad\text{if}\hspace*{2mm} m=2,n\geq 5, \\
      6 &\quad\text{if}\hspace*{1.3mm} (m\geq k-1, n= 0\hspace*{-2.5mm}\pmod{k}, k\in \lbrace3,4\rbrace)\hspace{1.3mm}\text{or}\hspace{1.3mm}  (m\in\lbrace 3,4\rbrace, n= 2\hspace*{-2.5mm} \pmod 4) ,\\
          \leq 7 &\quad\text{otherwise.} \ 
     \end{cases}
\]
\end{theorem}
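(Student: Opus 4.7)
My plan is to split the proof by $m$ and the residue of $n$, combining explicit constructions for the upper bounds with dedicated lower-bound arguments. For $m=2$ I would handle $n=4$ and $n \geq 5$ separately: since $P_2 \square C_4 \cong Q_3$, a $4$-star edge coloring can be given directly and the lower bound $\chi'_s \geq 4$ follows by ruling out proper $3$-edge-colorings via a short case analysis (every proper 3-edge-coloring decomposes the edges into three perfect matchings and one checks that some 4-cycle is forced to be bi-colored). For $n \geq 5$, the upper bound $\chi'_s(P_2 \square C_n) \leq 5$ comes from Theorem \ref{th2} when $n$ is even, and from a periodic $5$-coloring with a localized correction at the seam when $n$ is odd; the matching lower bound $\geq 5$ is obtained by a local case analysis on the colors at two consecutive rungs, showing that no $4$-star edge coloring can simultaneously respect the proper-coloring constraints at these rungs and avoid a bi-colored $4$-path along the top or bottom cycle.

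For the cases claimed to give $\chi'_s = 6$, I would adapt the period-$4$ coloring of $P_m \square P_n$ from Theorem \ref{th4} to the cyclic setting: the pattern closes up unchanged when $n \equiv 0 \pmod 4$; a period-$3$ pattern on the cycle edges (using three of the six available colors) handles $n \equiv 0 \pmod 3$; and the remaining small cases $m \in \{3,4\}$, $n \equiv 2 \pmod 4$ are finished by a finite number of explicit colorings obtained by locally modifying the closing block. The lower bound $\chi'_s \geq 6$ is immediate from Theorem \ref{th4} whenever the subgraph $P_m \square P_n$ already forces six colors, that is, when $m = n = 4$ or $m \geq 5$, $n \geq 3$. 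The residual small-$m$ subcases (in particular $m = 3$ with $n \equiv 0 \pmod 3$ and $m \in \{3,4\}$ with $n \equiv 2 \pmod 4$) require an enumeration argument patterned on Figure~\ref{fig8-56}, showing that the essentially unique $5$-star edge coloring of $P_3 \square P_k$ cannot be cyclically closed around $C_n$ without creating a bi-colored $4$-path at the seam.

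For the `otherwise' bound $\chi'_s \leq 7$, I would apply Corollary \ref{cor:comp}: taking $G = P_m$ so that $\chi(G) = 2$ and $\chi'_s(G) \leq 3$, part~(i) already yields $\chi'_s(P_m \square C_n) \leq 7$ for every even $n$, while parts~(ii)--(iii) give $\chi'_s \leq 8$ for odd $n \geq 5$; the remaining odd subcases are brought down to $7$ either by a sharper star-compatible family on the cycle (refining Theorem \ref{per:cycle} for the small values of $r$ that arise) or by a direct short-period construction. The main obstacle I expect is the lower bound $\chi'_s \geq 6$ in the subcases with $m = 3$: there Theorem \ref{th4} alone forces only five colors on the grid subgraph, so the cyclic closure must be used crucially, and the careful enumeration of all possible extensions of a $5$-coloring of $P_3 \square P_k$ around the full cycle is the technical heart of the proof.
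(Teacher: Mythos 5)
Your overall architecture matches the paper's: grid subgraphs via Theorem~\ref{th4} for most lower bounds, periodic constructions plus the star-compatibility machinery for the upper bounds. But two steps, as you describe them, would not go through. First, the lower bound $\chi^\prime_s(P_2\square C_n)\geq 5$ for $n\geq 5$ cannot come from a ``local case analysis on the colors at two consecutive rungs'': $P_2\square P_n$ admits a $4$-star edge coloring for every $n$ (Theorem~\ref{th4}), so no configuration at two consecutive rungs is locally contradictory; the obstruction is global. The paper's argument enumerates the three $4$-star edge colorings of $P_2\square P_3$, shows that exactly one of them extends (and then uniquely, to the coloring $f_{2,n}$) to $P_2\square P_n$ for $n\geq 5$, and checks that this forced periodic coloring cannot be closed around the cycle. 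You need that propagation-around-the-whole-cycle argument, not a two-rung one.

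Second, the ``otherwise $\leq 7$'' claim for odd $n\equiv 1,5 \pmod 6$ is exactly where the general machinery fails, and you have left it unresolved: for $m\geq 5$ we have $\chi^\prime_s(P_m)=3$, so Corollary~\ref{cor:comp}(ii)--(iii) only gives $8$, and ``a sharper star-compatible family or a direct short-period construction'' is the entire content of that case rather than a remark. The paper supplies it explicitly: two star-compatible $5$-colorings $f_0,f_1$ of $C_n$ placed on alternate rows, together with hand-designed column patterns $T_{i,b}$ using two further colors, verified on $P_7\square C_9$ and $P_7\square C_7$. A smaller inaccuracy: your list of residual lower-bound cases is off --- for $m\in\{3,4\}$ and $n\equiv 2\pmod 4$ one has $n\geq 6$, so Theorem~\ref{th4} already forces $6$ colors there; the cases genuinely not covered by the grid subgraph are $P_2\square C_3$ (settled in the paper by exhausting the would-be $5$-colorings) and $P_3\square C_4$ (settled by showing the unique $5$-star edge coloring of $P_3\square P_4$ cannot be closed). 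Your upper-bound constructions (Theorem~\ref{th2} for $m=2$ and even $n$, closing the grid pattern when $n\equiv 0\pmod 4$, a period-$3$ pattern when $n\equiv 0\pmod 3$) do align with the paper's.
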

\begin{proof}
{ By Theorem \ref{th4}, for every $n\geq 3$ we have $\chi^\prime_s(P_2\square P_n)=4$. Hence,  $\chi^\prime_s(P_2\square C_n)\geq4$. In Figure~\ref{fig10}, a 4-star edge coloring of $P_2\square C_4$ is shown. Thus, $\chi^\prime_s(P_2\square C_4)=4$.
\begin{figure}[!ht]\
\begin{center}
\includegraphics[scale=.9]{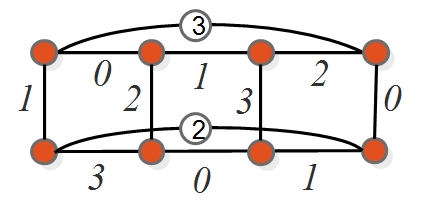}\\
\caption{A 4-star edge coloring of $P_2\square C_4$.}
\label{fig10}
\end{center}
\end{figure}
\par 
As shown in Figure~\ref{fig9}, there are only three  4-star edge colorings of $P_2\square P_3$. It is easy to check that  only the coloring given in Figure \ref{fig9-1}    can be  extended  to a 4-star edge coloring of $P_2\square P_n$, where $n\geq 5$. Infact, the extension of the coloring shown in Figure \ref{fig9-1} is unique and is the edge coloring $f_{2,n}$   in proof of  Theorem~\ref{th4}. But, note that  edge coloring $f_{2,n}$ cannot be extended to a 4-star edge  coloring of $P_m\square C_n$ (see Figure \ref{fig2}). Hence, for every $ n\geq 5$, $\chi^\prime_s(P_2\square C_n)\geq5$.  
\begin{figure}[!ht]\centering
\subfigure[]{\includegraphics[scale=.8]{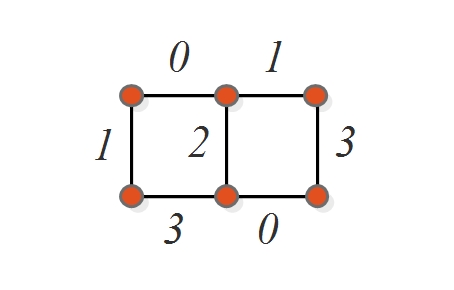}\label{fig9-1}}
\subfigure[] {\includegraphics[scale=.8]{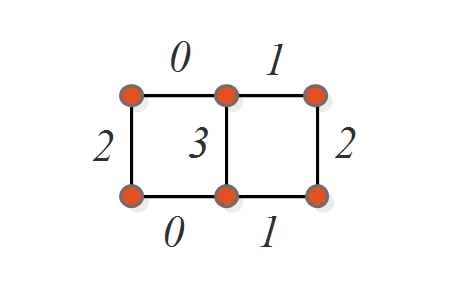}\label{fig9-2}}
\subfigure[] {\includegraphics[scale=.8]{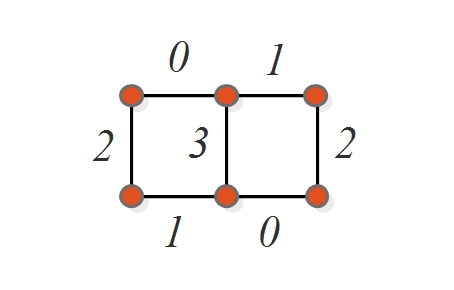}\label{fig9-3}}
\caption{All 4-star edge colorings of $P_2\square P_3$.}\label{fig9}
\end{figure}
\par 
  A 5-star edge coloring of $P_2\square C_5$ is shown in Figure~\ref{fig11}.
 If  $n>5$, then we define edge coloring $g_{2,n}$ for  $P_2\square C_n$ as follows. We first present  a 3-star edge coloring of   two copies of $C_n$ with colors $\lbrace0,1,2\rbrace$ such that  $g_{2,n}((0,0)(0,n-1))\neq g_{2,n}((1,0)(1,n-1))$. The remaining edges are colored by the following rule.  For all $j$, $0\leq j\leq n-1$, we have
\begin{eqnarray*}
  g_{2,n}((0,j)(1,j))=\begin{cases}
       3 &\quad\text{if}\hspace*{2mm} j= 0\hspace*{-1.5mm} \pmod 2,\\
      4 &\quad\text{if}\hspace*{2mm}  j= 1\hspace*{-1.5mm} \pmod 2. \
     \end{cases}
\end{eqnarray*}
Now, we  show that $g_{2,n}$ is a star edge coloring of $P_2\square C_n$, where $n>5$. Clearly, edge coloring $g_{2,n}$ is proper. Since  $g_{2,n}((0,0)(0,n-1))\neq g_{2,n}((1,0)(1,n-1))$,   no bi-colored 4-path (4-cycle) contains  two edges   $e=(0,0)(1,0)$ and $e^\prime=(0,n-1)(1,n-1)$.  For every two distinct vertices $x,y\in\nolinebreak V(C_n)$, consider two edges $e_1=(0,x)(1,x)$ and $e_2=(0,y)(1,y)$, where $\lbrace e_1,e_2\rbrace\neq\lbrace e,e^\prime\rbrace$. If  $g_{2,n}(e_1)=g_{2,n}(e_2)$, then the distance between $e_1$ and $e_2$ is at least two. It implies that it is impossible to get a bi-colored  4-path (4-cycle) under edge coloring $g_{2,n}$.  Thus, edge coloring $g_{2,n}$ is  a 5-star edge coloring of $P_2\square C_n$ and therefore $\chi^\prime_s(P_2\square C_n)=5$, for $n> 5$. 
\begin{figure}[!ht]\
\begin{center}
\includegraphics[scale=.9]{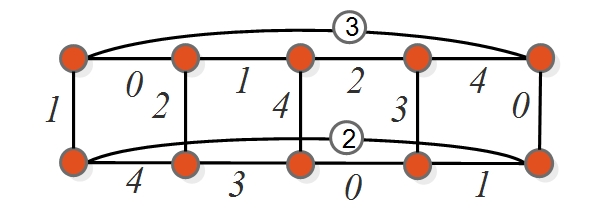}\\
\caption{A 5-star edge coloring of $P_2\square C_5$.}
\label{fig11}
\end{center}
\end{figure}


\par
   
 

 Now, let $m\geq 2$ and $n=0\hspace*{-1mm} \pmod 3$. In Figure \ref{fig10_5}, all possibilities to have a 5-star edge coloring of $P_2\square C_3$ are investigated and it is seen that this is impossible. Therefore, in this case $\chi^\prime_s(P_m\square C_n)\geq6$.  Note that, without loss of generality, in Figure \ref{fig10_5}  we assume  that the path $(1,0)(0,0)(0,1)$ is colored by patterns $(0,1,0)$ or $(0,1,2)$. In this case  we define edge coloring $g_{m,n}$ as follows. For every $i$ and $j$, where  $0\leq i \leq m-2$ and $0\leq j\leq n-1$, we have
 \[ g_{m,n}((i,j)(i+1,j))=
 \begin{cases}
      i \hspace*{-1.5mm} \pmod{3} +3&\quad\text{if}\hspace*{2mm} j= 0\hspace*{-1.5mm} \pmod 3,\\
      i+2 \hspace*{-1.5mm} \pmod{3} +3 &\quad\text{if}\hspace*{2mm}  j= 1\hspace*{-1.5mm} \pmod 3,\\
      i+1 \hspace*{-1.5mm} \pmod{3} +3&\quad\text{if}\hspace*{2mm}  j= 2\hspace*{-1.5mm} \pmod 3.
     \end{cases}
     \]
 For every $i$ and $j$, where  $0\leq i \leq m-1$ and $0\leq j\leq n-1$, we have
  \[ g_{m,n}((i,j)(i,j+1\hspace*{-2mm}\pmod{n}))=
 \begin{cases}
      i\hspace*{-1.5mm} \pmod{3}&\quad\text{if}\hspace*{2mm} j= 0\hspace*{-1.5mm} \pmod 3,\\
      i+1\hspace*{-1.5mm} \pmod{3}&\quad\text{if}\hspace*{2mm}  j= 1\hspace*{-1.5mm} \pmod 3,\\
     i+2\hspace*{-1.5mm} \pmod{3}&\quad\text{if}\hspace*{2mm}  j= 2\hspace*{-1.5mm} \pmod 3.
     \end{cases}
     \]
\begin{figure}[!ht]\
\begin{center}
\includegraphics[scale=.8]{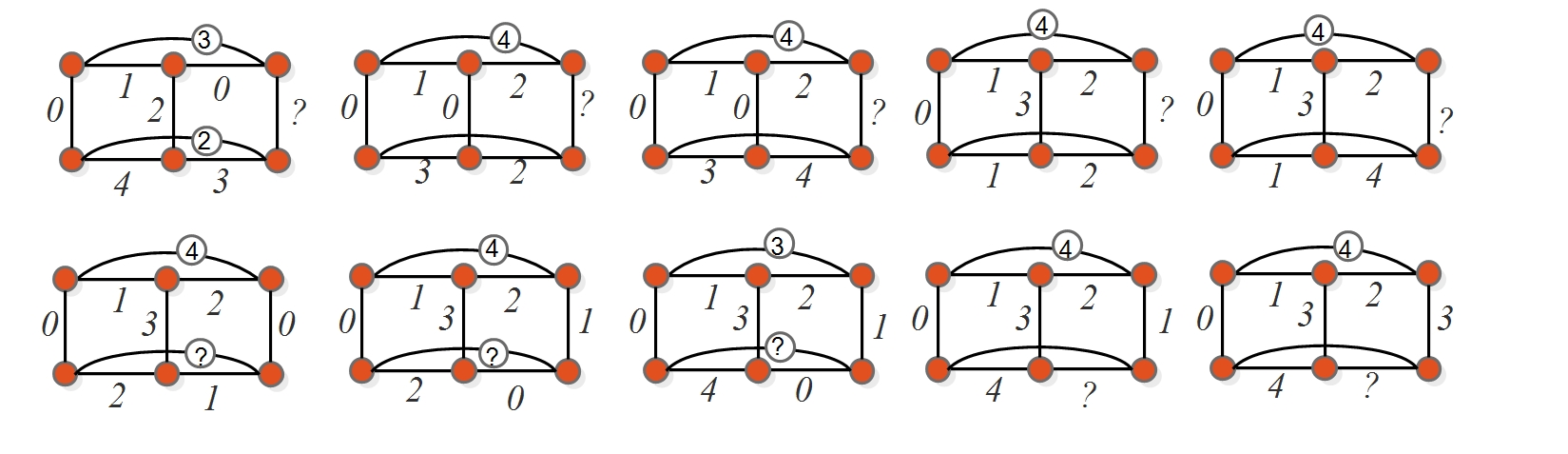}\\
\caption{There is no 5-star edge coloring of $P_2\square C_3$.}
\label{fig10_5}
\end{center}
\end{figure} 

 Since  $g_{m,n}$ has a repeating pattern, it  suffices to check that $g_{5,n}$ is a star edge coloring, for  $n\in\lbrace3,6\rbrace$.
Edge colorings $g_{5,3}$ and $g_{5,6}$ are shown in Figure \ref{fig13}, and it is easy to see that there is no bi-colored 4-path (4-cycle) in $P_5\square C_3$ and $P_5\square C_6$. Thus, we conclude that  for every $m\geq 2$ and $n=0 \hspace*{-1mm} \pmod{3}$, $g_{m,n}$  is a 6-star edge coloring of $P_m\square C_n$.
\begin{figure}[!ht]\centering
\subfigure[$g_{5,3}$]{\includegraphics[scale=.8]{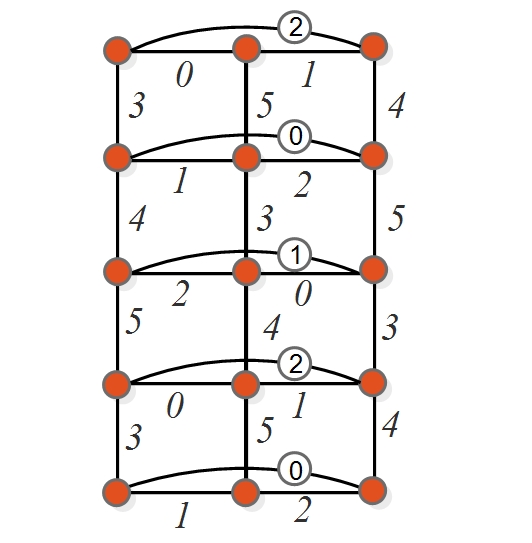}\label{fig13-1}}
\hspace{1mm}
\subfigure[$g_{5,6}$] {\includegraphics[scale=.8]{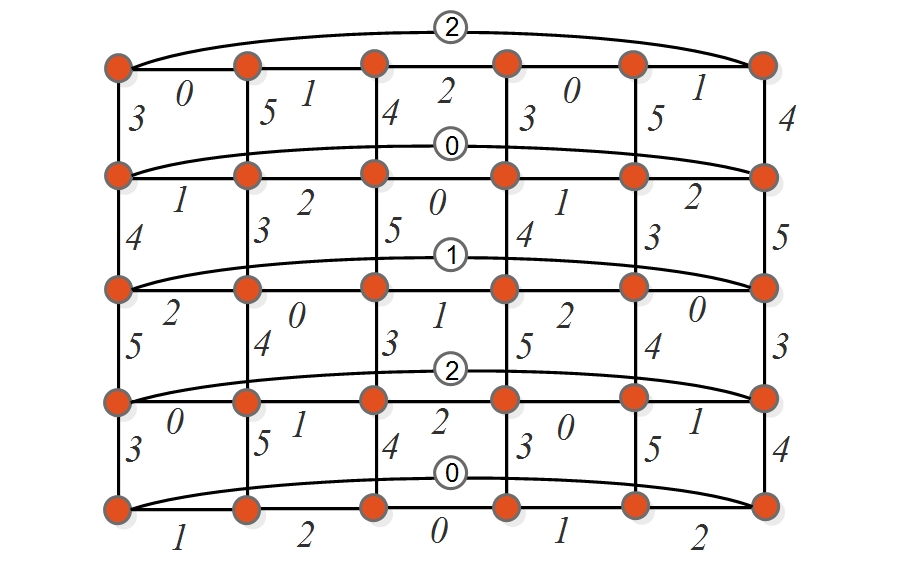}\label{fig13-2}}
\caption{  6-star edge colorings of $P_5\square C_3$ and $P_5\square C_6$.}\label{fig13}
\end{figure}
\par 
If  $m\geq 3$ and $n =0 \hspace*{-1mm} \pmod 4$, we first provide a 6-star edge coloring of $P_m\square C_4$ and then extend it to a 6-star edge coloring of $P_m\square C_n$. As it is shown in proof of  Theorem \ref{th4}, we have only one 5-star edge coloring of $P_3\square P_4$ and this coloring cannot be extended to a 5-star edge coloring of $P_3\square C_4$.  Thus, for every $m\geq 3$, we have
$$\chi^\prime_s(P_m\square C_4)\geq\chi^\prime_s(P_3\square C_4)\geq 6.$$
  By applying edge coloring $f_{m,4}$ in proof of Theorem \ref{th4}, we first color subgraph $P_m\square P_4$ of $P_m\square C_4$. Then, we  color the remaining edges as follows. The  edge sets $\lbrace (i,0)(i,3): i= 1\hspace*{-1mm} \pmod2\rbrace$  and $\lbrace (i,0)(i,3): i= 0\hspace*{-1mm}\pmod2\rbrace$ are colored by 4 and 5, respectively. It is easy to see that this coloring is a 6-star edge coloring of $P_m\square C_4$ and can be repeated, when $n$ is a multiple of 4. Therefore, we  extend obove edge coloring for edge coloring of  $P_m\square C_n$,  where  $m\geq 3$ and $n =0 \hspace*{-1mm} \pmod 4$. 
\par
If $m\in \lbrace 3,4\rbrace$ and $n=2 \hspace*{-1mm} \pmod 4$, then by Theorem~\rm{\ref{per:cycle}}, $C_n$ is $(4,2)$-star colorable. Therefore, by Theorem~\rm{\ref{th:col}}, $\chi^\prime_s(P_m\square C_n)\leq 6$.   Moreover, by Theorem \ref{th4}, we have  
$$\chi^\prime_s(P_m\square C_n)\geq \chi^\prime_s(P_m\square P_n)= 6.$$
 Thus,  in this case $\chi^\prime_s(P_m\square C_n)=6$.
\par 

For other cases, we show that $\chi^\prime_s(P_m\square C_n)\leq 7$.  First assume that $m\geq 5$ and $n=2\hspace*{-1mm} \pmod 4$. By Corollary \ref{cor:comp}, we have 
$$\chi^\prime_s(P_m\square C_n)\leq \chi^\prime_s(C_n)+2\chi(C_n)=7.$$
 If $m\geq 3$ and  $n\in\lbrace1,5\rbrace \hspace*{-1mm} \pmod6$, then by Theorem~\ref{per:cycle},
cycle $C_n$ is (5-2)-star colorable. We use two star compatible colorings $f_0$ and $f_1$ that are presented in proof of  Theorem~\ref{per:cycle} to color copies of $C_n$ as follows. For every $ i\in V(P_m)$ and  $jk\in E(C_m)$, we have
\begin{align*}
g_{m,n}((i,j)(i ,k))=\begin{cases}
f_0(jk)&\text{if} \hspace*{2mm} i= 0 \hspace*{-1.5mm} \pmod{2},\\
f_1(jk)&\text{otherwise.}\
 \end{cases}\
\end{align*}
  Now, we determine  color of the remaining edges, as follows. Let $n=b\hspace*{-1mm}\pmod{4}$, where $b\in\lbrace1,3\rbrace$. Consider  the following ordered $(m-1)$-tuples (each entry in the tuples represents a color).
\begin{align*}
&T_{0,1}=(5,6,5,1,5,6,5,1,\ldots), \hspace*{1mm} T_{1,1}=(5,6,5,2,5,6,5,2\ldots), \hspace*{1mm} T_{2,1}=(5,6,5,4,5,6,5,4,\ldots),\\
  &T_{n-2,1}=(5,6,5,3,5,6,5,3,\ldots),\hspace*{1mm} T_{n-1,1}=(5,6,5,0,5,6,5,0,\ldots),\\
&T_{0,3}=(6,5,3,5,6,5,3,5,\ldots), \hspace*{1mm} T_{1,3}=(5,2,5,6,5,2,5,6,\ldots), \hspace*{1mm}T_{2,3}=(5,4,5,6,5,4,5,6,\ldots),\\
&T_{n-2,3}=(5,1,5,6,5,1,5,6,\ldots), \hspace*{1mm}T_{n-1,3}=(0,5,6,5,0,5,6,5,\ldots).
\end{align*} 
 We denote the $l$-th entry of $T_{i,j}$ by $T_{i,j}^l$.  For every $i$ and $j$, where $0\leq i\leq m-2$ and $0\leq j\leq n-1$, let
 \begin{align*}
g_{m,n}((i,j)(i+1 ,j))=\begin{cases}
T_{2,b}^{i+1}&\text{if} \quad 2\leq j\leq n-3,\\
T_{j,b}^{i+1}& \text{otherwise}.\
 \end{cases}\ 
\end{align*}
%
For $m\geq 3$ and  $n\in\lbrace1,5\rbrace \hspace*{-1mm} \pmod6$, $g_{m,n}$ has a repeating pattern. Therefore,  to see that  $g_{m,n}$ is a star edge coloring of  $P_m\square C_n$, it suffices to check that $g_{7,9}$ and $g_{7,7}$ are star edge colorings. The edge  colorings $g_{7,9}$ and $g_{7,7}$ are shown in Figure~\rm{\ref{figfig1}}, they are  clearly 7-star edge colorings. 
 \begin{figure}[!ht]\centering
\subfigure[$g_{7,9}$]{\includegraphics[scale=.75]{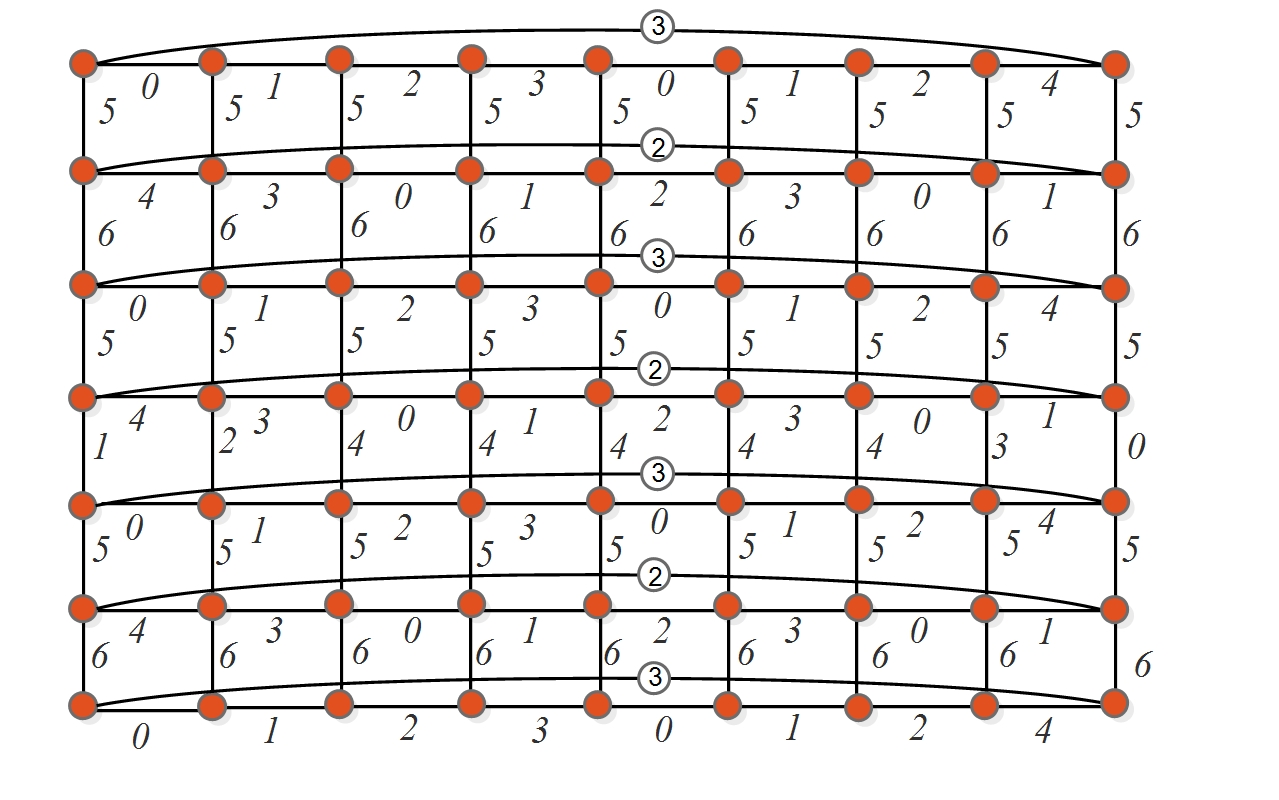}\label{fig14}}
\subfigure[$g_{7,7}$ ] {\includegraphics[scale=.75]{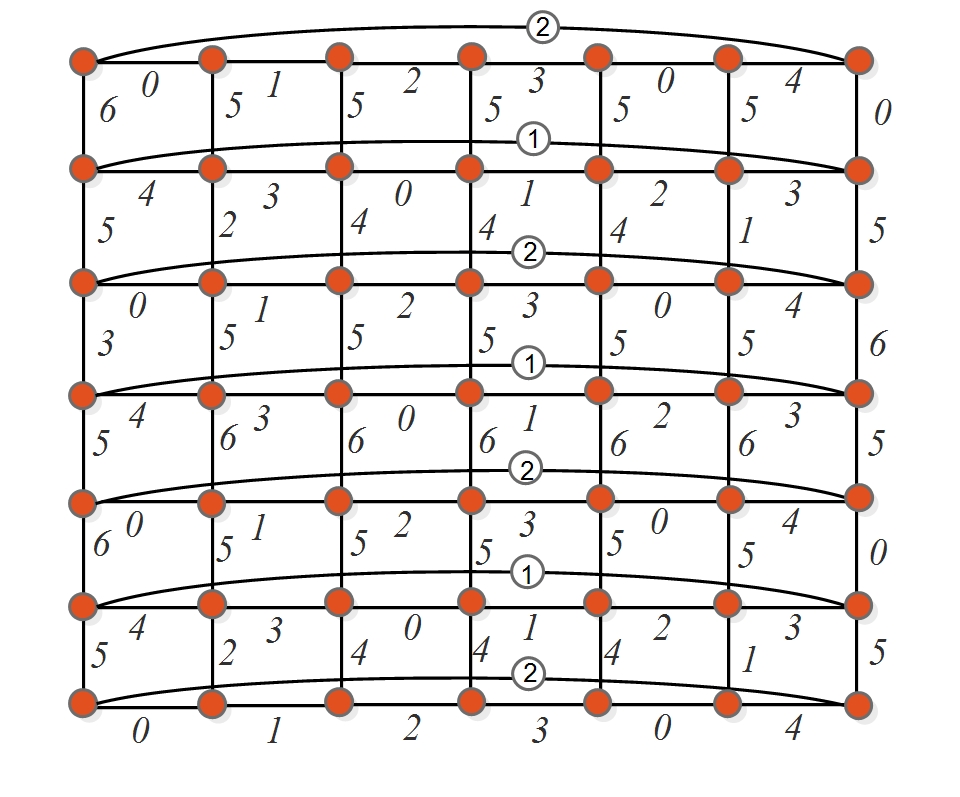}\label{fig15}}
\caption{ 7-star edge colorings of $P_7\square C_9$ and $P_7\square C_7$.}\label{figfig1}
\end{figure}
}
\end{proof}
Now, by Theorem \ref{th2} and \ref{th8}, we obtain an upper bound on the star chromatic index of hypercube $Q_d$, where $d\geq 3$,  as follows.
\begin{corollary}\label{corhyp}
If $Q_d$ is  d-dimensional hypercube and $d\geq 3$, then 
$$\chi^\prime_s(Q_d)\leq 2d-2.$$
Moreover this bound is tight.
\end{corollary}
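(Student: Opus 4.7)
The plan is to prove the upper bound by induction on $d$, using the decomposition $Q_d = P_2 \square Q_{d-1}$ combined with Theorem~\ref{th2}, and to anchor the induction at $d=3$ using the identity $Q_3 = P_2 \square C_4$ together with Theorem~\ref{th8}. Tightness will then be settled at the base case.

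\textbf{Base case ($d=3$).} Since $Q_2 = P_2 \square P_2 \cong C_4$, we have $Q_3 = P_2 \square Q_2 = P_2 \square C_4$. Theorem~\ref{th8} gives $\chi^\prime_s(P_2 \square C_4) = 4$, so $\chi^\prime_s(Q_3) = 4 = 2d - 2$.

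\textbf{Inductive step.} Suppose $d \geq 4$ and $\chi^\prime_s(Q_{d-1}) \leq 2(d-1) - 2 = 2d - 4$. Write $Q_d = P_2 \square Q_{d-1}$. Since hypercubes are bipartite, $\chi(Q_{d-1}) = 2$, and clearly $\chi^\prime_s(P_2) = 1$ and $\chi(P_2) = 2$. Applying Theorem~\ref{th2} with $G = P_2$ and $H = Q_{d-1}$ yields
\begin{equation*}
\chi^\prime_s(Q_d) \leq \min\{\chi^\prime_s(P_2)\chi(Q_{d-1}) + \chi^\prime_s(Q_{d-1}),\; \chi^\prime_s(Q_{d-1})\chi(P_2) + \chi^\prime_s(P_2)\} = 2 + \chi^\prime_s(Q_{d-1}).
\end{equation*}
By the inductive hypothesis, $\chi^\prime_s(Q_d) \leq 2 + (2d-4) = 2d-2$, completing the induction.

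\textbf{Tightness.} It suffices to exhibit one value of $d$ where equality holds; $d = 3$ works. Since $P_3$ is a subgraph of $C_4$, the graph $P_2 \square P_3$ is a subgraph of $P_2 \square C_4 = Q_3$, and the restriction of any star edge coloring of $Q_3$ to $P_2 \square P_3$ is a star edge coloring of the latter. Therefore $\chi^\prime_s(Q_3) \geq \chi^\prime_s(P_2 \square P_3)$, and Theorem~\ref{th4} gives $\chi^\prime_s(P_2 \square P_3) = 4$. Combined with the upper bound above, $\chi^\prime_s(Q_3) = 4 = 2\cdot 3 - 2$, showing the bound is tight.

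No step is expected to pose real difficulty: the only thing to check carefully is that the minimum in the application of Theorem~\ref{th2} is attained by the first expression (which it is, since $\chi^\prime_s(Q_{d-1}) \geq 1$), and that $Q_{d-1}$ is bipartite so $\chi(Q_{d-1}) = 2$. The base-case identity $Q_2 = C_4$ is what allows Theorem~\ref{th8} to kick in and give the stronger constant required to make the induction yield $2d-2$ rather than $2d$.
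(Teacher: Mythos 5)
Your proof is correct and follows essentially the same route as the paper: decompose $Q_d = P_2\square Q_{d-1}$, apply Theorem~\ref{th2} with $\chi(Q_{d-1})=2$ to get $\chi^\prime_s(Q_d)\leq 2+\chi^\prime_s(Q_{d-1})$, and anchor the induction with $\chi^\prime_s(Q_3)=\chi^\prime_s(P_2\square C_4)=4$ from Theorem~\ref{th8}. The only cosmetic difference is in the tightness discussion: your lower bound via $P_2\square P_3\subseteq Q_3$ is valid but redundant (Theorem~\ref{th8} already gives equality at $d=3$), while the paper additionally verifies tightness at $d=4$ using $Q_4=C_4\square C_4\supseteq P_4\square C_4$.
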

\begin{proof}{
It is known that  for every integer $d\geq 2$, $Q_{d}=Q_{d-1}\square P_2$ and  $\chi(Q_{d})=2$ \cite{bondy}.  Therefore,  by Theorem \ref{th2}, we have
$$\chi^\prime_s(Q_{d})\leq \chi^\prime_s(P_2)\chi(Q_{d-1})+\chi^\prime_s(Q_{d-1})\leq 2+ \chi^\prime_s(Q_{d-1}).$$
 Also, by Theorem~\ref{th8},  $\chi^\prime_s(Q_3)=\chi^\prime_s(P_2\square C_4)=4$. Thus, by  induction on $d$, it follows that
 $$\chi^\prime_s(Q_d)\leq  2d-2.$$
 Note that $Q_4= C_4\square C_4$. Then, by Theorem~\ref{th8}, we have
 $$\chi^\prime_s(Q_4)=\chi^\prime_s(C_4\square C_4)\geq  \chi^\prime_s(P_4\square C_4)=6,$$
 which implies $\chi^\prime_s(Q_4)=6$. Therefore, for $d=3$ and $d=4$, the upper bound is tight.
}\end{proof}

 In the following theorem, By Theorem \ref{th:col} and \ref{per:cyclei}, we give some upper bounds on the star chromatic index of the Cartesian product of two cycles.
\begin{theorem}\label{th9}
For every integers $m,n\geq 3$, we have the following statements.
\begin{enumerate}
\rm{\item[(i)] } If $m$ and $n$ are even integers, then $\chi^\prime_s(C_{m}\square C_{n})\leq 7$.
\item[(ii)] If $m>3$ is an odd integer and  $n$  is even integer, then $\chi^\prime_s(C_{m}\square C_{n})\leq 8$.
\item[(iii)] If $m=3$ and   $n$ is an even integer, then $\chi^\prime_s(C_{m}\square C_{n})\leq 9$.
\item[(iv)] If $m$ and   $n$ are odd integers, then $\chi^\prime_s(C_{m}\square C_{n})\leq 10$.
\end{enumerate}
\end{theorem}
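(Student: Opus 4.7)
The plan is to apply Theorem \ref{th:col} in each of the four parts, taking $H=C_n$ and equipping $G=C_m$ with a family of pairwise star compatible colorings supplied by Theorem \ref{per:cycle}; Proposition \ref{p1} then controls the additive summand $\chi'_s(C_n)$. The parameter $r$ that counts the compatible colorings must satisfy $r\geq\chi(C_n)$, so we take $r=2$ when $n$ is even and $r=3$ when $n$ is odd. The specific clause of Theorem \ref{per:cycle} to invoke will be dictated by the parity and size of $m$.

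For part (i), both cycles are even, and Theorem \ref{per:cycle}(i) applied with $r=2$ shows that $C_m$ is $(4,2)$-star colorable; together with $\chi'_s(C_n)=3$ from Proposition \ref{p1}, Theorem \ref{th:col} gives $\chi'_s(C_m\square C_n)\leq 4+3=7$. For part (ii), the hypothesis $m\geq 2r+1=5$ of Theorem \ref{per:cycle}(ii) holds since $m>3$ is odd, so $C_m$ is $(5,2)$-star colorable and the bound becomes $5+3=8$. For part (iii), $m=3$ lies outside the range of Theorem \ref{per:cycle}(ii), so I would invoke Theorem \ref{per:cycle}(iii) with $r=2$, which yields a $(2\cdot 2+\lceil 4/2\rceil,2)=(6,2)$-star coloring of $C_3$, and hence $\chi'_s(C_3\square C_n)\leq 6+3=9$.

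Part (iv) is the most delicate because $\chi(C_n)=3$ forces $r\geq 3$. When $m\geq 7$ is odd, Theorem \ref{per:cycle}(ii) with $r=3$ supplies a $(7,3)$-star coloring of $C_m$; combining with Proposition \ref{p1} we get $\chi'_s(C_m\square C_n)\leq 7+\chi'_s(C_n)\leq 7+3=10$ whenever $n\neq 5$. To cover the remaining configurations in which $m\in\lbrace 3,5\rbrace$ or $n=5$, I would first swap the roles of the two cycles whenever that reduces the situation to the previous case, and otherwise appeal to Theorem \ref{per:cycle}(iii) with $r=3$, which provides a $(6+\lceil 6/(m-1)\rceil,3)$-star coloring of $C_m$. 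The main technical obstacle will be the handful of small cases such as $C_3\square C_3$, $C_3\square C_5$, and $C_5\square C_5$, where the generic bound obtained this way is loose; closing this gap so that the uniform bound $10$ holds will probably require an ad hoc construction or a sharper compatibility argument that exploits the specific structure of these small cycles.
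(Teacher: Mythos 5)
Parts (i)--(iii) of your proposal coincide exactly with the paper's proof: the paper applies Theorem~\ref{th:col} with $C_m$ taken to be $(4,2)$-, $(5,2)$- and $(6,2)$-star colorable via Theorem~\ref{per:cycle}(i), (ii) and (iii) respectively, and adds $\chi^\prime_s(C_n)=3$. For part (iv) your generic step ($m\geq 7$ odd, $n$ odd, $n\neq 5$: use the $(7,3)$-star colorability of $C_m$ and add $\chi^\prime_s(C_n)=3$) is also the paper's argument, and the gap you flag at the end is genuine: the paper closes $C_3\square C_3$ and $C_5\square C_5$ exactly as you predict, by exhibiting ad hoc colorings (a $6$-star edge coloring of $C_3\square C_3$ and a $7$-star edge coloring of $C_5\square C_5$, given in a figure). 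So your proposal has the right skeleton but is not yet a complete proof of (iv).

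Be aware, though, that the set of uncovered cases is larger than the three products you name. For every odd $m\geq 7$ the product $C_m\square C_5$ (equivalently $C_5\square C_n$ with $n\geq 7$ odd) is also out of reach of your generic step: using $C_m$ as the $(7,3)$-star colorable factor gives $7+\chi^\prime_s(C_5)=7+4=11$, while using $C_5$ as the compatibly colored factor, Theorem~\ref{per:cycle}(iii) with $r=3$ only gives $(8,3)$, hence $8+3=11$; the case $C_3\square C_5$ falls through for the same reason. The paper resolves all of these by asserting that $C_5$ is $(7,3)$-star colorable ``by Theorem~\ref{per:cycle}(ii)'', but the hypothesis of that clause is $n\geq 2r+1=7$, which $n=5$ does not satisfy, so the citation does not literally apply. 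To finish along these lines one must actually construct three pairwise star compatible $7$-edge-colorings of $C_5$ by hand (or find another route for $C_m\square C_5$), in addition to the explicit colorings of $C_3\square C_3$ and $C_5\square C_5$.
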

\begin{proof}
{
\rm{(i)} Let $m$ and $n$ be both even integers. By Proposition~\ref{p1}, $\chi^\prime_s(C_m)=3$ and by Theorem~\ref{per:cyclei}(i), cycle $C_{m}$ is $(4,2)$-star colorable. Thus, by applying Theorem \ref{th:col}, $\chi^\prime_s(C_{m}\square C_{n})\leq 4+\chi^\prime_s(C_{n})=7.$
\par 
\noindent
\rm{(ii)}  Let $m>3$ be an odd integer  and  $n$  be an even integer. By Theorem \ref{per:cycle}(ii), cycle $C_m$ is $(5,2)$-star colorable. Therefore,  by Theorem \ref{th:col}, $\chi^\prime_s(C_m\square C_n)\leq 5+\chi^\prime_s(C_n)=8$, as desired. 
\par  
\noindent
\rm{(iii)}  Let  $m=3$ and $n$ be an even integer. By Theorem \ref{per:cycle}(iii),  $C_3$ is $(6,2)$-star colorable. Then,  by Theorem \ref{th:col}, $\chi^\prime_s(C_3\square C_n)\leq 6+\chi^\prime_s(C_n)=9$. 
\par

\noindent
\rm{(iv)}
Let $m$ and $n$ be  both odd integers. A 6-star edge coloring of $C_3\square C_3$ and a 7-star edge coloring of $C_5\square C_5$  are shown in Figure \ref{figcycle3-3}.
\begin{figure}[!ht]\
\begin{center}
\subfigure[$C_3\square C_3$]{\includegraphics[scale=.77]{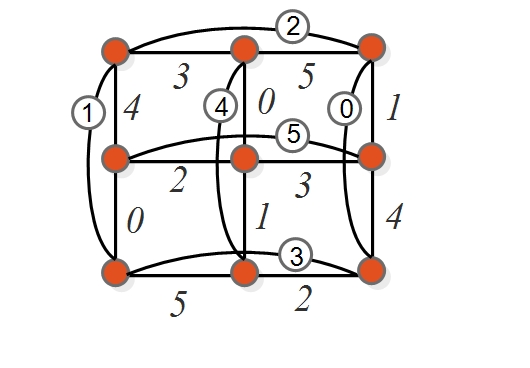}\label{figc3c3}}
\subfigure[$C_5\square C_5$ ] {\includegraphics[scale=.77]{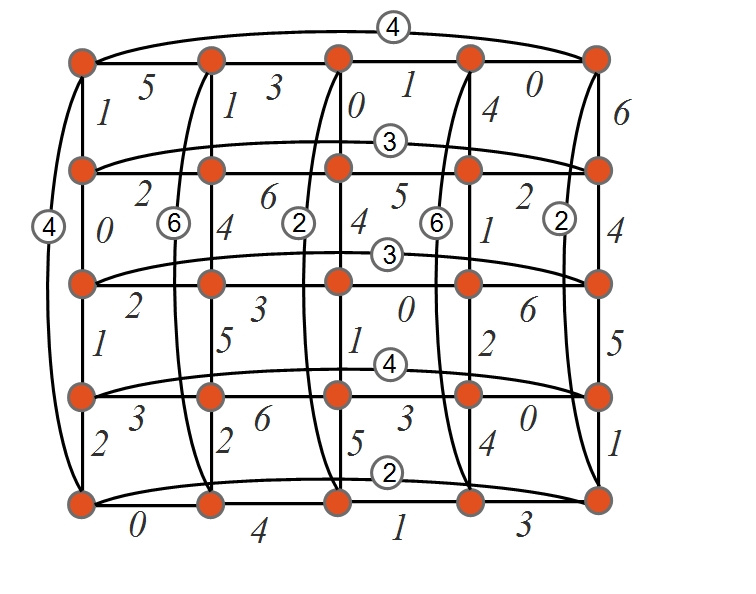}\label{figc5c5}}
\caption{A 6-star edge coloring of $C_3\square C_3$ ans a 5-star edge coloring of $C_5\square C_5$.}
\label{figcycle3-3}
\end{center}
\end{figure}
\par  Now, assume that $m$ and $n$ are not both 3 or 5. Without loss of generality, assume that $m>3$. 
If $m> 5$ and $n=5$, then $\chi^\prime_s(C_m)=3$ and,  by Theorem \ref{per:cycle}(ii), $C_n$ is $(7,3)$-star colorable. Hence, $\chi^\prime_s(C_m\square C_5)\leq 7+\chi^\prime_s(C_m)=10$.
In other cases, $\chi^\prime_s(C_n)=3$ and   $C_m$ is $(7,3)$-star colorable.   Thus, $\chi^\prime_s(C_m\square C_n)\leq 7+\chi^\prime_s(C_n)=10$. 
}
\end{proof}
\textbf{Remark.} By giving some more complex pattern for the  star edge coloring of $C_m\square C_n$, we have found that $\chi^\prime_s(C_m\square C_n)\leq 8$, when $m$ or $n$ is an odd integer. Moreover, we conjectured that $\chi^\prime_s(C_m\square C_n)\leq 7$, for every integers $m$ and $n$. 

%
\par By  Corollary \rm{\ref{cor:Gd}}, Theorem \ref{th:col}, \ref{per:cycle}, and  \ref{th9}, we can obtain an upper bound on the star chromatic index of $d$-dimensional toroidal grids as follows.
\begin{corollary}\label{col:tor}
For every  integers  $d\geq2$ and $l_1,l_2,\ldots,l_d\geq 3$,  we have the following statements.
\begin{enumerate}
\item[\rm{(i)}] The  toroidal grid $T_{2l_1,2l_2,\ldots,2l_d}$ is $(4d,2)$-star colorable and  $\chi^\prime_s(T_{2l_{1},2l_{2},\ldots,2l_d})\leq 4d-1$.
\item[\rm{(ii)}] If  every $l_i>3$, $1\leq i\leq d$,  then  the toroidal grid $T_{l_1,l_2,\ldots,l_d}$ is $(7d,3)$-star colorable and $\chi^\prime_s(T_{l_{1},l_{2},\ldots,l_d})\leq 7d-4$.
\end{enumerate}
\end{corollary}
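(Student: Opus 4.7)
The plan is to prove both statements by induction on $d$, using Theorem \ref{th:col2} to lift pairwise star compatible colorings of the cycle factors to the toroidal grid, and then invoking Theorem \ref{th:col} one last time to obtain the star chromatic index bound by trading one compatibility slot for the savings of a star edge coloring of the final factor (via Proposition \ref{p1}). Throughout, I will use that $\chi(G\square H)=\max\{\chi(G),\chi(H)\}$ so the chromatic number of the partial toroidal grid is controlled at every stage.

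For part (i), Theorem \ref{per:cycle}(i) with $r=2$ says each even cycle $C_{2l_i}$ is $(4,2)$-star colorable, and $C_{2l_i}$ is bipartite. Since the Cartesian product of bipartite graphs is bipartite, at every stage of the induction the partial toroidal grid has chromatic number $2$, so both hypotheses $t_G\geq\chi(H)$ and $t_H\geq\chi(G)$ of Theorem \ref{th:col2} hold with $t=2$. The inductive step then combines an $(4(d-1),2)$-star colorable $T_{2l_1,\ldots,2l_{d-1}}$ with a $(4,2)$-star colorable $C_{2l_d}$ to yield a $(4d,2)$-star colorable $T_{2l_1,\ldots,2l_d}$. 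For the chromatic index bound, applying Theorem \ref{th:col} to the factorization $T_{2l_1,\ldots,2l_{d-1}}\square C_{2l_d}$ together with $\chi^\prime_s(C_{2l_d})=3$ gives $\chi^\prime_s(T_{2l_1,\ldots,2l_d})\leq 4(d-1)+3 = 4d-1$.

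For part (ii) the same scheme runs with $r=3$. Theorem \ref{per:cycle}(i) shows each even $C_{l_i}$ is $(6,3)$-star colorable (and hence $(7,3)$-star colorable by padding with an unused color), while Theorem \ref{per:cycle}(ii) with $r=3$ gives $(7,3)$-star colorability for odd $l_i\geq 7$; for $l_i=5$ Theorem \ref{per:cycle}(iii) only delivers $(8,3)$-star colorability, which either has to be improved by a direct construction of three star compatible colorings of $C_5$ on seven colors or else absorbed at a single step of the induction. Because $\chi(T_{l_1,\ldots,l_d})=\max_i\chi(C_{l_i})\leq 3$ throughout, Theorem \ref{th:col2} yields $(7d,3)$-star colorability of $T_{l_1,\ldots,l_d}$. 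Finally Theorem \ref{th:col} applied to $T_{l_1,\ldots,l_{d-1}}\square C_{l_d}$, using $\chi^\prime_s(C_{l_d})=3$ from Proposition \ref{p1} (reserving an $l_d\neq 5$ as the last factor), produces the bound $\chi^\prime_s(T_{l_1,\ldots,l_d})\leq 7(d-1)+3=7d-4$.

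The one genuinely delicate point is the $l_i=5$ case in part (ii), where Theorem \ref{per:cycle} does not directly supply $(7,3)$-star colorability; aside from that ad hoc verification, the proof is a routine layered application of Theorems \ref{th:col} and \ref{th:col2}, essentially the heterogeneous-factor analogue of Corollary \ref{cor:Gd}.
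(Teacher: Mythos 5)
Your overall architecture matches the paper's: build the $(4d,2)$- and $(7d,3)$-star colorability of the toroidal grid by iterating Theorem \ref{th:col2} over the cycle factors (the paper invokes Corollary \ref{cor:Gd}, which is stated only for a homogeneous product $G\square\cdots\square G$, so your explicit heterogeneous induction using $\chi(G\square H)=\max\{\chi(G),\chi(H)\}$ is, if anything, the cleaner way to say what the paper means). The difference is in the final step: you peel off a single cycle and apply Theorem \ref{th:col} with $\chi^\prime_s(C_{l_d})=3$, whereas the paper peels off the last \emph{two} cycles, applying Theorem \ref{th:col} to $T_{l_1,\ldots,l_{d-2}}\square(C_{l_{d-1}}\square C_{l_d})$ together with Theorem \ref{th9}, which gives $4(d-2)+7=4d-1$ and $7(d-2)+10=7d-4$. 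For part (i) the two routes are equally valid and yield the same number.

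For part (ii), however, your route has a genuine gap that the paper's two-factor peeling avoids. Your bound $7(d-1)+\chi^\prime_s(C_{l_d})=7d-4$ requires a last factor with $l_d\neq 5$; if \emph{every} $l_i=5$ you are forced to use $\chi^\prime_s(C_5)=4$ and only obtain $7d-3$. The paper escapes this because Theorem \ref{th9}(iv) bounds $\chi^\prime_s(C_{l_{d-1}}\square C_{l_d})\leq 10$ uniformly over odd factors (and in fact $\chi^\prime_s(C_5\square C_5)\leq 7$ by the explicit coloring given there), so $7(d-2)+10=7d-4$ needs no case distinction. Separately, you are right that Theorem \ref{per:cycle} only certifies $C_5$ as $(8,3)$-star colorable, so the $(7d,3)$-star colorability claim is not fully justified when some $l_i=5$; the paper's own proof has exactly the same unaddressed point (it asserts that every cycle other than $C_3$ is $(7,3)$-star colorable, which does not follow from Theorem \ref{per:cycle} for $n=5$). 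Flagging this is correct, but your fallback of having it ``absorbed at a single step of the induction'' does not work as stated: one $(8,3)$ factor propagates through Theorem \ref{th:col2} to $(7d+1,3)$, not $(7d,3)$. To close your version of the argument you need both a direct construction of three pairwise star compatible $7$-colorings of $C_5$ and a separate treatment of the all-$C_5$ grid (for instance by reverting to the paper's two-factor peeling in that case).
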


\begin{proof}
{By Theorem \ref{per:cycle}, every even cycle is $(4,2)$-star colorable and every odd cycle (or every cycle), except $C_3$, is $(7,3)$-star colorable. Then, by Corollary \ref{cor:Gd}, $T_{2l_1,2l_2,\ldots,2l_{d}}$ is $(4d,2)$-star colorable and 
  $T_{l_1,l_2,\ldots,l_{d}}$ is $(7d,3)$-star colorable. Thus, by  Theorem \ref{th:col} and \ref{th9}, we have 
   $$\chi^\prime_s(T_{2l_1,2l_2,\ldots,2l_{d}})\leq 4(d-2)+\chi^\prime_s( C_{2l_{d-1}}\square C_{2l_{d}})\leq 4d-1,$$ 
   and  
   $$\chi^\prime_s(T_{l_1,l_2,\ldots,l_{d}})\leq 7(d-2)+\chi^\prime_s( C_{l_{d-1}}\square C_{l_{d}})\leq 7d-4.$$
}
\end{proof}


\begin{thebibliography}{100}
\bibitem{class}
Bezegov{\'a}, L., Lu{\v{z}}ar, B., Mockov{\v{c}}iakov{\'a}, M., Sot{\'a}k, R., and {\v{S}}krekovski, R. Star edge coloring 
of some classes of graphs.  Journal of Graph Theory, 81(1):73-82, 2016.
\bibitem{bondy}
Bondy, J. A. and Murty, U. S. R. Graph Theory with Applications, Vol. 290. Macmillan London, 2008.
\bibitem{mohar}
 Dvo{\v{r}}{\'a}k, Z., Mohar, B., and  \v{S}{\'a}mal, R. Star chromatic index.  Journal of Graph Theory, 
72(3):313-326, 2013.
\bibitem{fertin}
Fertin, G., Raspaud, A., and Reed, B. Star coloring of graphs.  Journal of Graph Theory, 47(3):163-182, 2004.
\bibitem{11}
Lei, H., Shi, Y., and Song, Z.X. Star chromatic index of subcubic multigraphs. Journal of Graph Theory, 88:566-576, 2018.
 \bibitem{12}
 Lei, H., Shi, Y., Song, Z.X., and Wang, T. Star 5-edge-colorings of subcubic multigraphs. Discrete Mathematics, 341(4):950-956, 2018.
\bibitem{delta}
Liu, X. S. and Deng, K. An upper bound for the star chromatic index of graphs with $\Delta\geq 7$.  Journal of Lanzhou University, 44(2):98-99, 2008. 
\bibitem{luzar}
  Lu{\v{z}}ar, B., Mockov{\v{c}}iakov{\'a}, M., Sot{\'a}k, R. On a star chromatic index of subcubic graphs. Electronic Notes in Discrete Mathematics, 61:835-839, 2017.
\bibitem{pradeep}
Pradeep, K. and Vijayalakshmi, V. Star chromatic index of subcubic graphs. Electronic Notes in Discrete Mathematics, 53:155-164, 2016.
\bibitem{Timmons}
Timmons, C. Star coloring high girth planar graphs. Electronic Journal of Combinatorics, 15(1):R124, 2008.
\bibitem{18}
 Wang, Y., Wang, W., and Wang, Y. Edge-partition and star chromatic index. Applied Mathematics and Computation, 333:480-489, 2018.
 \bibitem{17}
Wang, Y., Wang, Y., and Wang, W. Star edge-coloring of graphs with maximum degree four. Applied Mathematics and Computation, 340:268-275, 2019.
\end{thebibliography}
\setlength{\baselineskip}{0.8\baselineskip}

\end{document}